
\documentclass[a4paper]{amsart}
\usepackage{amssymb}
\usepackage{amscd}
\usepackage{amsthm}
\usepackage{amsmath}
\usepackage{latexsym}
\usepackage[all]{xy}
\usepackage[utf8]{inputenc}
\usepackage{enumitem}

\setlist[enumerate]{label={(\roman*)}}

\theoremstyle{plain}
\newtheorem{theorem}{Theorem}

\newtheorem{lemma}[theorem]{Lemma}

\theoremstyle{definition}
\newtheorem{definition}[theorem]{Definition}
\newtheorem{example}[theorem]{Example}

\theoremstyle{remark}
\newtheorem{notation}[theorem]{Notation}
\newtheorem*{remark}{Remark}
\newtheorem*{acknowledgment}{Acknowledgment}
\numberwithin{theorem}{section} 


\newcommand{\card}{\mbox{\rm{card\,}}}
\newcommand{\Add}{\mbox{\rm{Add\,}}}
\newcommand{\add}{\mbox{\rm{add\,}}}

\newcommand{\Filt}[1]{\operatorname{Filt}(#1)}

\newcommand{\cf}[1]{\mbox{\rm{cf}}(#1)}

\newcommand{\Hom}[3]{\operatorname{Hom}_{#1}(#2,#3)}
\newcommand{\Ext}[4]{\operatorname{Ext}^{#1}_{#2}(#3,#4)}

\newcommand{\rfmod}[1]{\mbox{\rm{mod}--}{#1}}
\newcommand{\rmod}[1]{\mbox{\rm{Mod}--}{#1}}

\newcommand{\Ker}[1]{\mbox{\rm{Ker}}(#1)}

\begin{document}

\title{Tree modules and limits of the approximation theory}
\author{Jan Trlifaj}
\address{Charles University, Faculty of Mathematics and Physics, Department of Algebra \\
Sokolovsk\'{a} 83, 186 75 Prague 8, Czech Republic}
\email{trlifaj@karlin.mff.cuni.cz}

\date{\today}
\subjclass[2010]{Primary: 16G70, 18G25. Secondary: 03E75, 05C05, 16D70, 16D90, 16E30.}%
\keywords{almost split sequences, approximations of modules, infinite dimensional tilting theory, set-theoretic homological algebra, tree modules.}
\thanks{Research supported by GA\v CR 17-23112S}
\begin{abstract} In this expository paper, we present a construction of tree modules and combine it with (infinite dimensional) tilting theory and relative Mittag-Leffler conditions in order to explore limits of the approximation theory of modules. We also present a recent generalization of this construction due to \v Saroch which applies to factorization properties of maps, and yields a solution of an old problem by Auslander concerning existence of almost split sequences.      
\end{abstract}

\dedicatory{}

\maketitle

\section*{Introduction} 

Relative homological algebra has gained momentum over the past two decades by the discovery of a number of new classes suitable for approximations (precovers and preenvelopes) by Enochs et al.\ \cite{EJ1}, \cite{GT2}, and by the discovery of direct connections between complete cotorsion pairs and model category structures on complexes of modules by Hovey \cite{EJ2}, \cite{Ho}. The rich supply of approximations makes it possible to solve various particular problems of module theory by employing fitting approximations, while the model category structures enable us to compute morphisms between objects of the unbounded derived category of modules as morphisms between their cofibrant and fibrant replacements modulo chain homotopy. 

Classes providing for approximations appeared ubiquitous in the early 2000s. A~first warning sign came in the form of an independence result by Eklof and Shelah \cite{ES2}: they proved that whether the class of all Whitehead groups is precovering or not depends on the extension of ZFC that we work in. Later on, the class of all $\aleph_1$-free groups was shown not to be precovering in ZFC (cf.\ \cite[\S5]{EGPT}). Since $\aleph_1$-free groups coincide with the flat Mittag-Leffler ones, the focus moved to Mitag-Leffler conditions and their relative versions defining various classes of locally free modules. Relative Mittag-Leffler conditions were known to be connected to (infinite dimensional) tilting theory by the work of Angeleri and Herbera \cite{AH}. However, even the basic case of (absolute) Mittag-Leffler modules resisted, with gradual improvements obtained in \cite{BS} and \cite{SaT}. 

The recent complete solution for the general case involving relative Mittag-Leffler conditions in \cite{AST} and \cite{Sa2} has revealed a surprising fact: despite the complexity of the set-theoretic homological algebra proofs, the modules witnessing non-existence of approximations can always be taken small, meaning countably generated. Their construction in the absolute case goes back to a classic work of Bass \cite{Ba}, where countably generated flat non-projective modules were constructed over any non-right perfect ring. The witnessing countably generated modules in the general setting have therefore been called \emph{Bass modules}. 

Tilting theory turned out to be helpful in passing from the absolute to the relative cases in \cite{AST} and \cite{Sa2}. If $T$ is an (infinitely generated) tilting module, then the corresponding relative Mittag-Leffler modules are called \emph{locally $T$-free}. Their class turns out to be precovering, iff $T$ is $\Sigma$-pure split, that is, iff each pure submodule of a module in $\Add (T)$ splits. For example, each $\Sigma$-pure-injective tilting module is $\Sigma$-pure split, and hence so are all finitely generated tilting modules over any artin algebra. However, already for hereditary artin algebras of infinite representation type, there do exist infinitely generated tilting modules $T$ that are not $\Sigma$-pure split \cite{AKT}. So the non-precovering phenomenon for locally $T$-free modules does occur even in the artin algebra setting. 

\medskip
The key technical tool in proving non-existence of approximations (or factorization properties) of modules are the \emph{tree modules}. Their construction, and its generalizations and applications, are the main topic of this expository paper. In Section~\ref{Ch1}, we define Bass modules and the tree modules induced by them first in the the basic (absolute) case, and then in the general setting. We present their principal properties, and the limits that they impose on the approximation theory of modules. This part is based on \cite{AST}, \cite{Sa2}, and \cite{ST1}.  
  
In Section~\ref{Ch2}, we present a recent generalization of the tree module construction due to \v Saroch \cite{Sa1}, used to answer a 40 years old question by Auslander on existence of almost split sequences from \cite{A1}. We will discuss his proof of the fact that for every associative unital ring $R$, a module $M$ is the right term in an almost split sequence in $\rmod R$, iff $M$ is finitely presented and has local endomorphism ring.  
                                                           
\medskip
In what follows, $R$ will denote an associative (but not necessarily commutative) ring with unit, and $\rmod R$ the category of all (unitary right $R$-) modules. Moreover, given an infinite cardinal $\kappa$ and a module $M$, we call $M$ \emph{$\kappa$-presented} (\emph{$< \kappa$-presented}) provided that $M$ is the direct limit of a direct system $\mathcal D$ of cardinality $\leq \kappa$ ($< \kappa$) such that $\mathcal D$ consists of finitely presented modules. For a class of modules $\mathcal C$, we will use the notation $\mathcal C ^{<\kappa}$ and $\mathcal C ^{\leq\kappa}$ to denote the subclass of $\mathcal C$ consisting of all $< \kappa$-presented modules, and $\kappa$-presented modules, respectively. 

The notation $\rfmod R$ stands for the class of all \emph{strongly finitely presented} modules, i.e, the modules possessing a projective resolution consisting of finitely generated projective modules. For example, if $R$ is a right coherent ring, then $\rfmod R = (\rmod R)^{<\omega}$ is the class of all finitely presented modules.

\section{Tree modules and their applications}\label{Ch1}

\subsection{Tree modules}

The tree modules that we consider here were originally developed, in various mutations, for the setting of abelian groups by Eklof, Mekler and Shelah in order to study almost free groups with various additional properties (e.g., the $\aleph_1$-separable and the Whitehead ones, cf.\ \cite[XVII.2]{EM} and \cite{ES1}). Our version here is based on the tree $T_\kappa$ of all finite sequences of ordinals less than a given infinite cardinal $\kappa$. This version was employed e.g.\ in \cite{GT1}. Our tree modules arise by a uniform decoration of the trees $T_\kappa$ with Bass modules. In order to explain the basic idea in more detail, we need to introduce further notation. 

Let $\kappa$ be an infinite cardinal. The nodes of the \emph{tree $T_\kappa$} are the maps (or sequences) of the form $\tau : n \to \kappa$ where $n < \omega$. Here, $n = \ell(\tau)$ is the {\it length} of $\tau$. The partial order $\leq$ on $T_\kappa$ is defined by $\tau ^\prime \leq \tau$, if  $\ell(\tau ^\prime) \leq \ell(\tau)$ and $\tau \restriction \ell(\tau ^\prime) = \tau ^\prime$. In other words, $\tau ^\prime \leq \tau$, if the sequence $\tau ^\prime$ forms an initial segment of the sequence $\tau$. 

For example, $\tau ^\prime = (0, 3, 1) \leq \tau = (0, 3, 1, 0)$, but $\tau^\prime = (1) \nleq \tau = (3,3)$. Notice that the tree $T_\kappa$ has cardinality $\kappa$, and each node of $T_\kappa$ has $\kappa$ immediate successors. 

However, the branches of $T_\kappa$ are short: they correspond 1-1 to countable sequences of ordinals $< \kappa$, that is, to the maps $\nu : \omega \to \kappa$. In particular, $\hbox{Br}(T_\kappa)$, the set of all branches of $T_\kappa$, has cardinality $\kappa^\omega$.   

\begin{remark}\label{rem_card_arit} Of course, $\kappa \leq \kappa^\omega$. We will especially be interested in the cardinals $\kappa$ such that $\kappa < \kappa^\omega = 2^\kappa$. It is easy to see that there is a very good supply -- a proper class -- of such cardinals: for each cardinal $\mu$, there is $\kappa \geq \mu$ such that $\kappa^\omega = 2^\kappa$ (see e.g.\ \cite[8.26]{GT2}).
\end{remark}    

\medskip
\begin{notation}\label{notat_bas} Let $\mathcal F$ be a set of countably presented modules. A {\it Bass module} for $\mathcal F$ is a module $B$ which is a direct limit of a countably infinite direct system of modules from $\mathcal F$. Possibly taking a cofinal subset, we can w.l.o.g. express $B$ as a direct limit $B = \varinjlim_{i < \omega}  F_i$ where $\mathfrak D = ( F_i, f_{ji} \mid i \leq j < \omega )$ is a direct system of modules from $\mathcal F$ indexed by $\omega$. This is the direct system that will be used for decoration of branches of the tree $T_\kappa$. 
\end{notation}

The following particular example of a Bass module is a model one. Later on, we will see that the decoration of $T_\kappa$ by this module always yields a flat Mittag-Leffler module: 

\begin{example}\label{Bass_ex} Let $\mathcal F = \{ R \}$, $a_i \in R$, and $f_i : R \to R$ be the left multiplication by $a_i$ for each $i < \omega$. Then the chain $R \overset{f_0}\to R \overset{f_1}\to \dots R \overset{f_i} \to R \overset{f_{i+1}}\to \dots$ yields a direct system $\mathcal D$ (with $f_{ji} = f_{j,j-1} ... f_{i+1,i}$ for all $i < j < \omega$) whose direct limit is the countably presented module $B$ with the presentation $0 \to R^{(\omega )} \overset{f}\to R^{(\omega )} \to B \to 0$  where $f$ maps the $i$th term $1_i$ of the canonical basis of $R^{(\omega )}$ to $1_i - 1_{i+1}.a_i$. This is the \emph{classic Bass module}.

By \cite{Ba}, $B$ is a flat module of projective dimension at most $1$, and if $B$ is projective, then the chain of principal left ideals 
$$Ra_0 \supseteq Ra_1a_0 \supseteq \dots Ra_n...a_0 \supseteq Ra_{n+1}a_n...a_0 \supseteq ...$$ 
stabilizes. In particular, if $R$ is not right perfect, then there exists a classic Bass module which is not projective.  
\end{example}
   
\begin{remark} If $\mathcal F$ is any class of countably generated projective modules, then the Bass modules over $\mathcal F$ are flat and countably presented, and hence of projective dimension at most $1$. Conversely, each countably presented flat module is a Bass module for some class of finitely generated free modules, see e.g.\ \cite[2.23]{GT2}. 
\end{remark}
  
\medskip
\begin{notation}\label{notat_tree} Now we turn to the construction of the tree modules. The idea is to combine the facts that all branches of the tree $T_\kappa$ have length $\omega$ and the Bass module $B$ is a direct limit of the direct system $\mathfrak D = ( F_i, f_{ji} \mid i \leq j < \omega )$ of modules indexed in $\omega$, and uniformly decorate the branches of $T_\kappa$ with the members of $\mathcal D$. The resulting tree module $L$ is defined as a submodule of the product $P = \prod_{\tau \in T_\kappa} F_{\ell(\tau)}$ as follows:

For each $\nu \in \hbox{Br}(T_\kappa)$, $i < \omega$, and $x \in F_i$, we define $x_{\nu i} \in P$ by 
$$\pi_{\nu \restriction i} (x_{\nu i}) = x,$$ 
$$\pi_{\nu \restriction j} (x_{\nu i}) = f_{ji}(x) \hbox{ for each } i < j < \omega,$$ 
$$\pi_\tau (x_{\nu i}) = 0 \hbox{ otherwise},$$
where $\pi_\tau \in \Hom {R}{P}{F_{\ell(\tau)}}$ denotes the $\tau$th projection for each $\tau \in T_\kappa$.

Let $X_{\nu i} = \{ x_{\nu i} \mid x \in F_i \}$. Then $X_{\nu i}$ is a submodule of $P$ isomorphic to $F_i$. Further, let $X_\nu = \sum_{i < \omega} X_{\nu i}$, and $L = \sum_{\nu \in \hbox{Br}(T_\kappa)} X_\nu$. The module $L$ is the \emph{tree module} corresponding to $\kappa$ and to the presentation of $B$ as the direct limit of the direct system $\mathcal D$. 
\end{notation}

Here is our first observation concerning the module $L$:

\begin{lemma}\label{tree_module_L} 
\begin{enumerate}
\item $X_\nu \cong \bigoplus_{i < \omega} F_i$ for each $\nu \in \mbox{Br}(T_\kappa)$.
\item Let $D = \bigoplus_{\tau \in T_\kappa} F_{\ell(\tau)}$. Then $D \subseteq L \subseteq P$, and we have the tree module exact sequence 
$$0 \to D \overset{\subseteq}\to L \to B^{(\hbox{Br}(T_\kappa))} \to 0.$$
\end{enumerate}  
\end{lemma}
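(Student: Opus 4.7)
For part (i), the plan is to show first that $X_{\nu i} \cong F_i$ via $x \mapsto x_{\nu i}$ (surjective by definition, and injective since $\pi_{\nu \restriction i}(x_{\nu i}) = x$), and then that $X_\nu = \sum_{i<\omega} X_{\nu i}$ is a direct sum. The key is the three-case projection formula built into the definition of $x_{\nu i}$: $\pi_{\nu \restriction j}(x_{\nu i})$ equals $0$ for $j < i$, equals $x$ for $j = i$, and equals $f_{ji}(x)$ for $j > i$. So from a hypothetical finite relation $\sum_{i \leq N} (x_i)_{\nu i} = 0$, applying $\pi_{\nu \restriction j}$ yields $x_j = -\sum_{i<j} f_{ji}(x_i)$, and an induction on $j$ starting at $j=0$ forces every $x_i$ to vanish.

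For part (ii), the first step is to exhibit $D$ inside $L$. Given $\tau \in T_\kappa$ of length $n$ and $x \in F_n$, I choose any branch $\nu \in \hbox{Br}(T_\kappa)$ with $\nu \restriction n = \tau$ and form the telescoping difference $e_\tau(x) := x_{\nu n} - (f_{n+1,n}(x))_{\nu,n+1}$. The same projection formula shows that $e_\tau(x)$ takes value $x$ at $\tau$ and vanishes at every other node of $T_\kappa$; in particular it is independent of the choice of $\nu \geq \tau$ and serves as the standard basis vector of $D$ at coordinate $\tau$ with entry $x$. Running over all $\tau$ and $x$ yields $D \subseteq L$.

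To construct the quotient map $\phi : L \to B^{(\hbox{Br}(T_\kappa))}$, let $g_i : F_i \to B$ be the colimit structure maps. Part (i) gives a well-defined $\phi_\nu : X_\nu \to B$ sending $(x_i)_{\nu i} \mapsto \sum_i g_i(x_i)$; the plan is to set $\phi(\sum_\nu y_\nu) = \sum_\nu \phi_\nu(y_\nu) e_\nu$. The main obstacle is well-definedness: if $\sum_\nu y_\nu = 0$ in $L$ with finitely many nonzero summands, I must show $\phi_\nu(y_\nu) = 0$ for every $\nu$ in the support. This follows because any finite collection of distinct branches pairwise diverges at some finite level, so for all sufficiently large $j$ the node $\nu \restriction j$ is unique to $\nu$ among the support; hence $\pi_{\nu \restriction j}(y_\nu) = 0$, and the projection formula then forces the image of $y_\nu$ in $F_j$ to vanish, which via $g_j$ gives $\phi_\nu(y_\nu) = 0$.

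With $\phi$ in hand, surjectivity is immediate since $g_i(x)\cdot e_\nu$ is the image of $x_{\nu i}$. For the kernel, $D \subseteq \ker\phi$ follows from $\phi(e_\tau(x)) = g_n(x) - g_{n+1}(f_{n+1,n}(x)) = 0$, using $g_{n+1} f_{n+1,n} = g_n$. For the reverse inclusion, iterating the telescoping identity yields the congruence $(x)_{\nu i} \equiv (f_{ji}(x))_{\nu,j} \pmod{D}$ for all $j \geq i$. If $\phi(y) = 0$ with $y = \sum_\nu y_\nu$, then $\phi_\nu(y_\nu) = 0$ for each $\nu$, and directedness of the colimit over $\omega$ provides some $j$ with $\sum_i f_{ji}(x_i) = 0$ in $F_j$; the congruence then gives $y_\nu \equiv 0 \pmod{D}$, so $y \in D$.
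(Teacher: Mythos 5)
Your proposal is correct but proceeds differently from the paper's proof, so a brief comparison is in order. For part (i), you argue directness of $\sum_{i<\omega}X_{\nu i}$ by applying the projections $\pi_{\nu\restriction j}$ to a finite relation $\sum_{i\le N}(x_i)_{\nu i}=0$ and inducting on $j$; the paper instead constructs the increasing chain $Y_{\nu i}=\sum_{j\le i}X_{\nu j}$ and shows each step $Y_{\nu i}\subseteq Y_{\nu,i+1}$ splits by exhibiting an explicit split short exact sequence $0\to X_{\nu i}\to F_i\oplus X_{\nu,i+1}\to F_{i+1}\to 0$. Both yield the claim; the paper's version records the extra structural fact that the $Y_{\nu i}$ form a chain of direct summands, while yours is a more elementary linear-independence computation. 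For part (ii), you build the quotient map $\phi\colon L\to B^{(\mathrm{Br}(T_\kappa))}$ directly by summing the colimit cocone maps $g_i$, check well-definedness via the eventual divergence of any finite set of distinct branches, and compute $\ker\phi=D$ from the telescoping congruence $x_{\nu i}\equiv(f_{ji}(x))_{\nu j}\pmod D$; the paper works at the level of $L/D$, verifying that the maps $x\mapsto x_{\nu i}+D$ make $(X_\nu+D)/D$ into a colimit of $\mathfrak D$ (hence $\cong B$), and then invokes the disjointness of supports to see that the family $((X_\nu+D)/D)_{\nu}$ is independent in $L/D$. These are essentially two descriptions of the same isomorphism $L/D\cong B^{(\mathrm{Br}(T_\kappa))}$. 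One small merit of your write-up is that you make the containment $D\subseteq L$ explicit via the elements $e_\tau(x)=x_{\nu\,\ell(\tau)}-(f_{\ell(\tau)+1,\ell(\tau)}(x))_{\nu,\,\ell(\tau)+1}$, a fact the paper uses but leaves implicit.
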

\begin{proof} 1. Let $Y_{\nu i} = \sum_{j \leq i} X_{\nu i}$. Then $Y_{\nu i} = \bigoplus_{j < i} F_j \oplus X_{\nu i}$, and the inclusion $Y_{\nu i} \subseteq Y_{\nu, i+1}$ splits, because there is a split exact sequence 
$$0 \to X_{\nu i} \overset{p}\hookrightarrow F_i \oplus X_{\nu, i+1} \overset{q}\to F_{i+1} \to 0$$
where $p (x_{\nu i}) = x + (f_i(x))_{\nu, i+1}$, and $q (z + x_{\nu, i+1}) = x - f_i(z)$.  

2. Let $\nu \in  \mbox{Br}(T_\kappa)$. Then $B \cong (X_\nu + D)/D$. Indeed,  for each $i < \omega$, we can define $g_i : F_i \to (X_\nu + D)/D$ by $g_i(x) = x_{\nu i} + D$. Then $((X_\nu + D)/D, g_i \mid i \in I)$ is the direct limit of the direct system $\mathfrak D$.

Since each element of $X_\nu$ is a sequence in $P$ whose $\tau$th component is zero for all $\tau \notin \{ \nu \restriction i \mid i < \omega \}$, the modules $((X_\nu + D)/D \mid \nu \in \mbox{Br}(T_\kappa) )$ are independent. Thus $L/D = \bigoplus_{\nu \in  \mbox{Br}(T_\kappa)} (X_\nu + D)/D \cong B^{(\mbox{Br}(T_\kappa))}$.
\end{proof}

\subsection{Locally $\mathcal F$-free modules}

Tree modules can be used to construct locally free modules. Before introducing them, we recall several useful definitions.
  
\begin{definition}\label{filt} Let $\mathcal C$ be a class of modules. A module $M$ is \emph{$\mathcal C$-filtered} (or a \emph{transfinite extension} of the modules in $\mathcal C$), provided that there exists an increasing chain $\mathcal M = ( M_\alpha \mid \alpha \leq \sigma )$ of submodules of $M$ with the following properties: 

\begin{enumerate}
\item $M_0 = 0$, 
\item $M_\alpha = \bigcup_{\beta < \alpha} M_\beta$ for each limit ordinal $\alpha \leq \sigma$, 
\item $M_{\alpha +1}/M_\alpha \cong C_\alpha$ for some $C_\alpha \in \mathcal C$, and 
\item $M_\sigma = M$. 
\end{enumerate} 

The chain $\mathcal M$ is a \emph{$\mathcal C$-filtration} (of length $\sigma$) of the module $M$. The class of all $\mathcal C$-filtered modules will be denoted by $\Filt {\mathcal C}$.

A class $\mathcal C \subseteq \rmod R$ is \emph{deconstructible} provided there exists a subset $\mathcal C ^\prime \subseteq \mathcal C$ such that $\mathcal C = \Filt {\mathcal C ^\prime}$. If moreover $\mathcal C ^\prime$ can be chosen so that $\mathcal C ^\prime \subseteq \mathcal C ^{<\kappa}$ for an infinite cardinal $\kappa$, then $\mathcal C$ is called \emph{$\kappa$-deconstructible}. 
\end{definition}

Transfinite extensions include direct sums and extensions:

\begin{example}\label{cases} 1. Let $M = \bigoplus_{\alpha < \sigma} C_\alpha$ be a direct sums of copies of modules from $\mathcal C$. Then $M$ is $\mathcal C$-filtered (just take $M_\alpha = \bigoplus_{\beta < \alpha} C_\beta$ for each $\alpha \leq \sigma$).
 
2. Let $0 \to C_1 \to M \to C_2 \to 0$ be an extension of the modules $C_1, C_2 \in \mathcal C$. Then $M$ is $\mathcal C$-filtered (take $M_0 = 0$, $M_1 = C_1$, and $M_2 = M$).       
\end{example}

The class $\Filt {\mathcal C}$ is obviously closed under transfinite extensions (i.e., $\Filt {\Filt {\mathcal C}}  = \Filt {\mathcal C$}), and hence under extensions and (arbitrary) direct sums. 

However, $\Filt {\mathcal C}$ is usually much larger than the closure of $\mathcal C$ under extensions and direct sums. For example, if $\mathcal C$ is the class of all simple modules, then the latter closure is just the class of all semisimple modules while $\Filt {\mathcal C}$ is the class of all semiartinian modules. 

\medskip
Deconstructible classes provide for approximations, and make it possible to do relative homological algebra. 
We postpone discussing them in more detail after introducing basics of the approximation theory later in this Section. 

\medskip
We will now turn to locally free modules. We recall the notation of \cite{HT} and \cite{ST1}:

\begin{definition}\label{dense} Let $R$ be a ring, $M$ a module, and $\kappa$ an infinite regular cardinal. 

A system $\mathcal S$ consisting of $<\kappa$-presented submodules of $M$ satisfying the conditions 
\begin{enumerate}
\item $\mathcal S$ is closed under unions of well-ordered ascending chains of length $<\kappa$, and
\item each subset $X \subseteq M$ such that $\card X < \kappa$ is contained in some $N \in \mathcal S$,
\end{enumerate}
is called  a \emph{$\kappa$-dense system} of submodules of $M$.
\end{definition}

Notice that in the setting of Definition \ref{dense}, $M$ is the directed union of the modules from $\mathcal S$. 

In order to connect this notion with the setting of \ref{notat_bas}, we consider $\kappa$-dense systems consisting of countably $\mathcal F$-filtered modules:

\begin{definition}\label{locally_F-free} Let $\mathcal F$ be a set of countably presented modules. Denote by $\mathcal C$ the class of all modules possessing a countable $\mathcal F$-filtration. Let $M$ be a module.
 
Then $M$ is \emph{locally $\mathcal F$-free} provided that $M$ contains an $\aleph _1$-dense system $\mathcal S$ consisting of submodules from $\mathcal C$. The system $\mathcal S$ is said to \emph{witness} the locally $\mathcal F$-freeness of $M$.  
\end{definition}

Notice that if $M$ is countably generated, then $M$ is locally $\mathcal F$-free, iff $M \in \mathcal C$. We also note the following result from \cite{ST1}:

\begin{lemma}\label{closed_under} Let $\mathcal F$ be a set of countably presented modules. Then the class of all locally $\mathcal F$-free modules is closed under transfinite extensions. In particular, it contains $\Filt {\mathcal F}$.
\end{lemma}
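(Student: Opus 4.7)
The plan is to construct, given a filtration $(M_\alpha \mid \alpha \leq \sigma)$ of $M$ with each factor $M_{\alpha+1}/M_\alpha$ locally $\mathcal F$-free, a witnessing $\aleph_1$-dense system $\mathcal S$ for $M$ directly out of chosen witnessing systems $\mathcal S_\alpha$ for the factors. Write $\pi_\alpha \colon M_{\alpha+1} \twoheadrightarrow M_{\alpha+1}/M_\alpha$ for the canonical projections. For each countable $X \subseteq M$, I will produce a submodule $N \in \mathcal C$ with $X \subseteq N \subseteq M$ by an $\omega$-step iterative closure; then $\mathcal S$ is taken to be the class of all such $N$ together with their countable ascending unions, so that property (i) of Definition~\ref{dense} holds by construction, while property (ii) is exactly the existence statement.

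The closure goes as follows. Set $N^{(0)} = \langle X \rangle$ and $T_n = \{\alpha < \sigma : (N^{(n)} \cap M_{\alpha+1})/(N^{(n)} \cap M_\alpha) \neq 0\}$. At stage $n+1$, for each $\alpha \in T_n$ use property (ii) of Definition~\ref{dense} to enlarge the previous representative $N^{(n-1)}_\alpha$ (taken to be $0$ if undefined) to some $N^{(n)}_\alpha \in \mathcal S_\alpha$ that also contains the countably generated image $\pi_\alpha(N^{(n)} \cap M_{\alpha+1})$; lift a countable generating set of $N^{(n)}_\alpha$ to $L^{(n)}_\alpha \subseteq M_{\alpha+1}$; and put $N^{(n+1)} = \langle N^{(n)} \cup \bigcup_{\alpha \in T_n} L^{(n)}_\alpha \rangle$. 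Setting $N = \bigcup_n N^{(n)}$ and $T = \bigcup_n T_n$, for each $\alpha \in T$ one then has $(N \cap M_{\alpha+1})/(N \cap M_\alpha) = \bigcup_n N^{(n)}_\alpha$, which lies in $\mathcal S_\alpha$ by property (i) of Definition~\ref{dense} applied to a countable ascending chain, and is therefore countably $\mathcal F$-filtered. The induced filtration $(N \cap M_\beta)_{\beta \leq \sigma}$ of $N$ then has only countably many non-trivial factors, and concatenating their countable $\mathcal F$-filtrations produces the desired countable $\mathcal F$-filtration of $N$, so $N \in \mathcal C$.

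The main technical difficulty is keeping each $T_n$ countable: an $R$-linear combination of generators of $N^{(n)}$ can lie strictly below the filtration level of every generator involved, so new support ordinals may appear at each stage that were not directly targeted in the preceding round. Including \emph{all} $\alpha \in T_n$ in the closure at the following stage (rather than only the levels of the newly added generators) guarantees stabilization after $\omega$ steps, so that $T = \bigcup_n T_n$ is countable. The \emph{in particular} clause is then immediate: each $F \in \mathcal F$ is countably presented and admits the trivial countable $\mathcal F$-filtration $0 \subsetneq F$, so $F \in \mathcal C$ and hence is locally $\mathcal F$-free; any $\mathcal F$-filtered module is therefore a transfinite extension of locally $\mathcal F$-free modules, and the main assertion applies.
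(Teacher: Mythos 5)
Your overall architecture — for each countable $X \subseteq M$, build a countably $\mathcal F$-filtered submodule $N \supseteq X$ by an $\omega$-step closure that drives the traces $\pi_\alpha(N \cap M_{\alpha+1})$ into the dense systems $\mathcal S_\alpha$ of the factors, then assemble these $N$'s into a witnessing system — is the right kind of strategy, and the concatenation step (countably many nonzero traces, each in $\mathcal C$, yield a countable $\mathcal F$-filtration of $N$) is fine. But the argument has a genuine gap at exactly the point you flag as the main difficulty: the countability of $T_n$. This already fails for $T_0$. A countably generated submodule $N^{(0)} = \langle X \rangle$ has cardinality up to $\card R \cdot \aleph_0$, and the $R$-multiples of a single generator can sit at uncountably many distinct levels of the filtration. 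Concretely, let $k$ be a field, $R = k[x_\alpha \mid \alpha < \omega_1]$, $M = R$, with $M_\alpha = (x_\beta \mid \beta < \alpha)$ for $\alpha \leq \omega_1$ and $M_{\omega_1+1} = R$. Then $N^{(0)} = \langle 1 \rangle = R$ satisfies $x_\alpha \in (N^{(0)}\cap M_{\alpha+1})\setminus(N^{(0)}\cap M_\alpha)$ for every $\alpha < \omega_1$, so $T_0$ has cardinality $\aleph_1$ and the very first closure step produces an $N^{(1)}$ that is no longer countably presented. Your observation that ``including all $\alpha \in T_n$ at the next stage guarantees stabilization after $\omega$ steps'' explains why no new support ordinals appear indefinitely, but it does nothing to bound $T_0$, and the iteration breaks as soon as one $T_n$ is uncountable. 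A correct proof has to use the structure of the dense systems $\mathcal S_\alpha$ (or a reduction to closure under extensions plus closure under unions of continuous well-ordered chains treated separately) rather than a raw cardinality count that fails over uncountable rings.

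There is also a secondary, more cosmetic issue. Taking $\mathcal S$ to be ``the class of all such $N$ together with their countable ascending unions'' does not yield a class closed under countable ascending unions: a countable ascending union of such unions is in general only a countable \emph{directed} union of the original $N$'s, not an ascending one. Moreover one still has to argue that the new unions lie in $\mathcal C$, which is not automatic (a countable ascending chain of members of $\mathcal C$ need not have its union in $\mathcal C$). Both points can be repaired — define $\mathcal S$ instead as the set of all countably presented $N \subseteq M$ such that $\pi_\alpha(N \cap M_{\alpha+1}) \in \mathcal S_\alpha$ for every $\alpha$ and only countably many of these traces are nonzero; this invariant is visibly preserved under countable ascending unions and forces $N \in \mathcal C$ by concatenation — but this repair leaves the main cardinality gap untouched.
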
  

\medskip
Let us pause to have another look at our basic setting:

\begin{example} Let $\mathcal F$ be the class of all countably presented projective modules. Then $\mathcal C = \mathcal F$, and $\mathcal F$-filtered modules coincide the projective modules (by a classic result of Kaplansky).

The locally $\mathcal F$-free modules are called \emph{$\aleph_1$-projective}, \cite{EM}. They are characterized by the existence of an $\aleph_1$-dense system consisting of countably generated projective modules. 
\end{example}

$\aleph_1$-projective modules are nothing else than the well-known \emph{flat Mittag-Leffler modules}, i.e., the flat modules $M$ such that for each family $( N_i \mid i \in I )$ of left $R$-modules, the canonical map $M \otimes \prod_{i \in I} N_i \to \prod_{i \in I} (M \otimes_R N_i)$ is monic. In fact, we have (see \cite{HT} and \cite{RG}):

\begin{lemma}\label{mitlef} The following are equivalent for a module $M$:
\begin{enumerate}
\item $M$ is $\aleph_1$-projective;
\item $M$ is flat Mittag-Leffler;
\item For some (or any) presentation $M = \varinjlim_{i \in I} F_i$ of $M$ as a direct limit of a direct system of finitely presented modules $( M_i \mid i \in I )$, and for each module $N$, the inverse system $( \Hom R{M_i}N \mid i \in I )$ has the Mittag-Leffler property.\end{enumerate}
\end{lemma}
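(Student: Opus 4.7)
The plan is to prove the three equivalences as a cycle, using two classical Raynaud--Gruson inputs as black boxes: the tensor-product/Hom characterization of flat Mittag-Leffler modules, and the theorem that a countably generated flat Mittag-Leffler module is projective.

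For (ii) $\Leftrightarrow$ (iii), fix any presentation $M = \varinjlim_{i \in I} F_i$ by finitely presented modules. Since each $F_i$ is finitely presented, $F_i \otimes_R -$ commutes with arbitrary direct products, so the canonical map $M \otimes_R \prod_j N_j \to \prod_j (M \otimes_R N_j)$ is identified with the comparison
$$ \varinjlim_i \prod_j (F_i \otimes_R N_j) \longrightarrow \prod_j \varinjlim_i (F_i \otimes_R N_j). $$
An element of the kernel is represented by a family $(y_j)_j$ with $y_j \in F_i \otimes_R N_j$ for some common $i$ and with each $y_j$ killed by some $i_j \geq i$; injectivity demands a single $i' \geq i$ killing all $y_j$ uniformly. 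Unwinding this uniformity in terms of generators of $F_i$ translates it into the Mittag-Leffler condition on the inverse system $(\Hom R{F_i}N)$ (with $N = \prod_j N_j$), and conversely. Since (ii) is manifestly intrinsic to $M$, the choice of presentation in (iii) is irrelevant, which accounts for the ``for some or any'' clause.

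For (i) $\Rightarrow$ (ii), let $\mathcal S$ witness the $\aleph_1$-projectivity of $M$. A Hill-lemma refinement of $\mathcal S$ yields a continuous well-ordered filtration $(M_\alpha \mid \alpha \leq \sigma)$ of $M$ with $M_{\alpha+1}/M_\alpha$ countably generated projective, hence flat Mittag-Leffler by (iii). I would then show by transfinite induction that every $M_\alpha$ is flat Mittag-Leffler: successor steps follow from closure under extensions (a snake-lemma diagram chase applied to $0 \to M_\alpha \otimes \prod N_j \to M_{\alpha+1} \otimes \prod N_j \to (M_{\alpha+1}/M_\alpha) \otimes \prod N_j \to 0$, using flatness of $\prod N_j$ and Mittag-Leffler of the flanks), while limit steps handle the $\varinjlim \to \prod$ uniformity by invoking the $\Hom$-theoretic form (iii) on the presentations of the $M_\beta$ for $\beta < \alpha$. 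For (ii) $\Rightarrow$ (i), let $\mathcal S$ be the family of all countably generated pure submodules of $M$. Every countable subset of $M$ lies in some member of $\mathcal S$, constructed as the union of an $\omega$-chain of countably generated submodules alternating with their pure closures in $M$; and $\mathcal S$ is closed under countable unions since purity is preserved under direct limits. Each $N \in \mathcal S$, being pure in a flat Mittag-Leffler module, is itself flat and Mittag-Leffler (diagram chase with purity of $N \hookrightarrow M$ and Mittag-Leffler of $M$), hence countably generated projective by the Raynaud--Gruson theorem. Thus $\mathcal S$ witnesses $\aleph_1$-projectivity.

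The main obstacle is precisely this last Raynaud--Gruson input: the statement that every countably generated flat Mittag-Leffler module is projective rests on their delicate surjectivity analysis of maps from countable direct limits into finitely presented modules, and is not accessible by elementary means. Everything else — the tensor bookkeeping of (ii) $\Leftrightarrow$ (iii), the Hill-lemma refinement of the $\aleph_1$-dense system into a continuous filtration, and the standard countable pure-closure construction — becomes routine once that input is available.
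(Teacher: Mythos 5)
The paper does not prove Lemma~\ref{mitlef} at all; it simply cites \cite{HT} and \cite{RG}. So the relevant question is whether your proposal is correct on its own terms, and it is not: the implication (i)\,$\Rightarrow$\,(ii) has a genuine gap.

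You claim that the $\aleph_1$-dense system $\mathcal S$ witnessing $\aleph_1$-projectivity can be ``Hill-lemma refined'' into a \emph{continuous well-ordered filtration} of $M$ with countably generated projective successive quotients, and then you argue by transfinite induction. This step fails. The Hill lemma takes as input a module that is \emph{already} $\mathcal C$-filtered and produces a rich lattice of $\mathcal C$-filtered submodules; it does not upgrade a mere $\aleph_1$-dense system to a filtration. An $\aleph_1$-projective module need not be $\{$countably generated projective$\}$-filtered at all. The standard counterexample is the Baer--Specker group $\mathbb Z^\omega$ over $R=\mathbb Z$: it is a product of Mittag-Leffler (finitely presented) modules, hence Mittag-Leffler, flat, and $\aleph_1$-free, so $\aleph_1$-projective — yet it is not free, whereas any $\mathbb Z$-module carrying a continuous filtration with free countably generated quotients is free (each successor step splits since $\operatorname{Ext}^1_{\mathbb Z}(\text{free},-)=0$, and the splittings assemble over limit ordinals). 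So the filtration you posit does not exist, and your induction never gets started. A correct proof of (i)\,$\Rightarrow$\,(ii) argues directly with the dense system: $M$ is the directed union of the projective modules in $\mathcal S$, so it is flat, and the Mittag-Leffler condition (say in the $\operatorname{Hom}$-inverse-system form of~(iii)) is checked using that every countable configuration already sits inside some projective $N\in\mathcal S$ and that $\mathcal S$ is closed under unions of countable chains.

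A secondary caveat concerns (ii)\,$\Rightarrow$\,(i). Your $\omega$-chain of ``pure closures'' produces a countably generated pure submodule only when the number of finite linear systems over a countably generated submodule with coefficients in $R$ is countable, i.e.\ essentially when $R$ is countable. Over uncountable rings the statement is still true, but the countably generated dense system must be extracted using the Mittag-Leffler stabilization data itself (this is the content of the relevant arguments in \cite{HT}), not by naive pure closure. As written, your argument quietly assumes $\card R \leq \aleph_0$.

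Your treatment of (ii)\,$\Leftrightarrow$\,(iii) as the Raynaud--Gruson tensor/$\operatorname{Hom}$ translation, and the appeal to the Raynaud--Gruson theorem that countably generated flat Mittag-Leffler modules are projective, are both appropriate black boxes here and match the intended sources; the problem lies entirely in the two bridges to condition~(i).
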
 

Recall that an inverse system of modules $\mathcal H = (H_i, h_{ij} \mid i \leq j \in I )$ has the \emph{Mittag-Leffler property}, if for each $k \in I$ there exists $k \leq j \in I$, such that $\mbox{Im}(h_{kj}) = \mbox{Im}(h_{ki})$ for each $j \leq i \in I$, that is, the terms of the decreasing chain $( \mbox{Im}(h_{ki}) \mid k \leq i \in I )$ of submodules of $H_k$ stabilize. 

\medskip
We will denote by $\mathcal{FM}$ the class of all flat Mittag-Leffler modules. If $R$ is a right perfect ring, then $\mathcal P _0 = \mathcal{FM} = \mathcal F _0$ \cite{Ba}, where $\mathcal P _0$ and $\mathcal F _0$ denotes the class of all projective and flat modules, respectively. However, if $R$ is not right perfect, then $\mathcal P _0 \subsetneq \mathcal{FM} \subsetneq \mathcal F _0$, and we will see later that these three classes have rather different structural properties.  

\medskip
Now, we can prove that our general construction of the tree module $L$ always yields a locally $\mathcal F$-free module. 
Of course, if $B$ is $\mathcal F$-filtered, then so is $L$. But if $B$ is not $\mathcal F$-filtered, the result is quite surprising: $L$ has only a {\lq}small{\rq} $\mathcal F$-filtered submodule $D$, while the {\lq}big{\rq} quotient $L/D$ is a direct sum of copies of $B$ (cf.\ Lemma \ref{tree_module_L}). 

\begin{lemma}\label{L} The tree module $L$ from Lemma \ref{tree_module_L} is locally $\mathcal F$-free.
\end{lemma}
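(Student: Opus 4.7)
The plan is to produce an explicit $\aleph_1$-dense system $\mathcal S$ of countably $\mathcal F$-filtered submodules of $L$. For each countable $S \subseteq \mbox{Br}(T_\kappa)$, I set $T_S = \{\nu \restriction i : \nu \in S,\ i < \omega\}$ and $N_S = D_{T_S} + \sum_{\nu \in S} X_\nu$, where $D_{T_S} = \bigoplus_{\tau \in T_S} F_{\ell(\tau)} \subseteq D$, and take $\mathcal S = \{ N_S : S \subseteq \mbox{Br}(T_\kappa)\text{ countable}\}$. The assignment $S \mapsto N_S$ clearly commutes with directed unions, so $\mathcal S$ is closed under well-ordered ascending chains of length $< \aleph_1$. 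For density, any countable subset $Y \subseteq L$ is generated by countably many elements of the form $x_{\nu, i}$ and elements of $D$ supported on countably many nodes of $T_\kappa$; collecting the at most countably many branches involved into a set $S$ yields $Y \subseteq N_S$.

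The main content is to show $N_S \in \mathcal C$. Enumerate $S = \{\nu_n : n < \omega\}$ and, for each $n < \omega$, put $T^{(n)} = \{\nu_k \restriction j : k < n,\ j < \omega\}$ and $M_n = D_{T^{(n)}} + \sum_{k < n} X_{\nu_k}$, so that $0 = M_0 \subseteq M_1 \subseteq \ldots$ and $\bigcup_{n<\omega} M_n = N_S$. I would argue by induction that each $M_{n+1}/M_n$ is itself countably $\mathcal F$-filtered, and then invoke closure of $\Filt {\mathcal F}$ under transfinite extensions (the remark following Example~\ref{cases}): concatenating $\omega$ many countable filtrations yields one of countable length, so $N_S \in \mathcal C$.

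For the inductive step, let $j_n = \min\{j : \nu_n \restriction j \notin T^{(n)}\}$. A direct computation from Notation~\ref{notat_tree} gives the absorption identity
$$x_{\nu_n, i} - (f_{j_n, i}(x))_{\nu_n, j_n} \in D_{T^{(n)}} \subseteq M_n$$
for every $i < j_n$ and $x \in F_i$. Hence $M_{n+1} = M_n + Z_n$, where $Z_n = \sum_{i \geq j_n} X_{\nu_n, i} + \bigoplus_{j \geq j_n} F_{\ell(\nu_n \restriction j)}$. Since $M_n$ is supported (as subset of $P$) on $T^{(n)}$ while $Z_n$ is supported on the disjoint set $\{\nu_n \restriction j : j \geq j_n\}$, we get $Z_n \cap M_n = 0$ and so $M_{n+1}/M_n \cong Z_n$. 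But $Z_n$ is structurally a shifted single-branch tree module, and the chain $\tilde Y_i := \bigoplus_{j_n \leq j < i} F_{\ell(\nu_n \restriction j)} \oplus X_{\nu_n, i}$ for $i \geq j_n$ is a countable $\mathcal F$-filtration of $Z_n$ whose successive quotients lie in $\mathcal F$ by the split short exact sequence displayed in the proof of Lemma~\ref{tree_module_L}.

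The main obstacle is bookkeeping when $j_n > 0$, i.e., when $\nu_n$ shares a nontrivial initial segment with some earlier $\nu_k$; the absorption identity above is exactly what neutralises this, by pushing the $T^{(n)}$-supported components of $x_{\nu_n, i}$ (for $i < j_n$) into $M_n$ and thereby reducing the analysis of $M_{n+1}/M_n$ to the clean single-branch picture already handled in Lemma~\ref{tree_module_L}.
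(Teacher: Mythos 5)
Your proof is correct and follows essentially the same route as the paper: the dense system is $\mathcal S = \{X_C \mid C\subseteq \mbox{Br}(T_\kappa)\text{ countable}\}$ (note your $N_S$ coincides with the paper's $X_C=\sum_{\nu\in C}X_\nu$, since $D_{T_S}\subseteq\sum_{\nu\in S}X_\nu$ already), and each $X_C$ is shown to lie in $\mathcal C$ via the same $\omega$-step filtration by the partial sums $\sum_{k<n}X_{\nu_k}$, each step split with complement a countable direct sum of $F_i$s. You supply the support-disjointness and absorption-identity details that the paper leaves implicit, but the argument is the same.
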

\begin{proof} For each countable subset $C = \{ \nu_i \mid i < \omega \}$ of $\mbox{Br}(T_\kappa)$, the module $X_C = \sum_{\nu \in C} X_\nu$ 
is isomorphic to a countable direct sum of the $F_i$s. In fact, $X_C = \bigcup_{i < \omega} X_{C_i}$ where 
$X_{C_i} = \sum_{j \leq i} X_{\nu_j}$ is a direct summand in $X_{C_{i+1}}$ with a complement isomorphic to a countable direct sum of the $F_i$s. So the local $\mathcal F$-freeness of $L$ is witnessed by the set $\mathcal S$ of all $X_C$, where $C$ runs over all countable subsets of $\mbox{Br}(T_\kappa)$. 
\end{proof}

\subsection{Module approximations}

In this section, we recall the relevant basic notions and results from the approximation theory of modules. For more details we refer to  \cite[Part II]{GT2}. 

Approximations of modules were introduced by Auslander, Reiten and Smal{\o} in the setting of finitely generated modules over artin algebras while Enochs and Xu studied them in the general setting of $\rmod R$, albeit using the different terminology of precovers and preenvelopes. We will primarily be interested in the general setting, so our terminology follows \cite{EJ1}:  

\begin{definition} 
\begin{itemize}
\item[\rm{(i)}] A class of modules $\mathcal A$ is \emph{precovering} 
if for each module $M$ there is $f \in \mbox{Hom}_R(A,M)$ with $A \in \mathcal A$ such that
each $f^\prime \in \mbox{Hom}_R(A^{\prime},M)$ with $A^\prime \in \mathcal A$ has a factorization through 
$f$: 
\[
\xymatrix{A \ar[r]^{f} & M \\ 
{A^\prime} \ar@{-->}[u]^{g} \ar[ur]_{f^\prime} &}
\]
The map $f$ is called an \emph{$\mathcal A$-precover} of $M$.
\item[\rm{(ii)}] An $\mathcal A$-precover is \emph{special} in case it is surjective, and its kernel $K$ satisfies 
$\mbox{Ext}_R^1(A,K) = 0$ for each $A \in \mathcal A$. 
\item[\rm{(iii)}] Let $\mathcal A$ be precovering. Assume that in the setting of (i), if $f^\prime = f$ then each factorization $g$ is an automorphism. Then $f$ is called an \emph{$\mathcal A$-cover} of $M$. The class $\mathcal A$ is \emph{covering} in case each module has an $\mathcal A$-cover. 
\end{itemize}

Dually, we define \emph{(special) preenveloping} and \emph{enveloping} classes of modules. 
\end{definition}

Precovering classes are ubiquitous because of the following basic facts due to Enochs and \v S\v tov\'\i\v cek:

\begin{theorem}\label{abundance_precov} Let $\mathcal C$ be a class of modules. Then the following implications hold:  
\begin{enumerate}
\item If $\mathcal C$ is deconstructible, then $\mathcal C$ is precovering. 
\item If $\mathcal C$ is precovering and closed under direct limits, then $\mathcal C$ is covering.
\end{enumerate}
\end{theorem}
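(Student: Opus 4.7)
The two statements call for rather different methods; I would treat them in turn.

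For part (i), the strategy is to place $\mathcal C$ inside the left-hand class of a complete cotorsion pair generated by a set, and then to control the Eklof--Trlifaj construction carefully enough that the resulting special precover already lives in $\mathcal C$. Fix a subset $\mathcal S \subseteq \mathcal C$ with $\mathcal C = \Filt{\mathcal S}$. The essential preliminary is Eklof's lemma, which shows $\Filt{\mathcal S} \subseteq {}^{\perp}(\mathcal S^\perp)$ and hence $\mathcal S^\perp = \mathcal C^\perp$. The Eklof--Trlifaj construction --- a transfinite small-object-style iteration of pushouts along chosen presentations of the generators in $\mathcal S$, carried out up to an ordinal whose cofinality exceeds the presentation bound of those generators --- then produces, for each module $M$, a short exact sequence
$$0 \to K \to C \to M \to 0$$
in which $C$ is $\mathcal S$-filtered, so $C \in \mathcal C$, while $K \in \mathcal S^\perp = \mathcal C^\perp$. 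The projection $C \to M$ is therefore a special $\mathcal C$-precover.

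For part (ii), I would follow the classical direct-limit argument of Enochs. The opening step is to establish a cardinality bound: for each module $M$, any $\mathcal A$-precover $A \to M$ is a directed union of $\mathcal A$-subobjects of $A$ of cardinality bounded in terms of $|M|$ and $|R|$, and closure of $\mathcal A$ under direct limits keeps such a union in $\mathcal A$. Pick an $\mathcal A$-precover $f : A \to M$ with $|A|$ minimal among all such. To see that $f$ is a cover, take any endomorphism $g$ of $A$ with $f g = f$ and form the direct limit $A_\infty$ of the $\omega$-chain $A \xrightarrow{g} A \xrightarrow{g} A \to \cdots$. Since $A_\infty \in \mathcal A$ and the induced map $A_\infty \to M$ is again an $\mathcal A$-precover, a comparison with the minimal $f$ forces $g$ to be an automorphism.

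The principal obstacle in (i) is the familiar technical subtlety of Eklof--Trlifaj: the very first rung of the pushout tower needs a surjection onto $M$ from a module already in $\Filt{\mathcal S}$, which in general requires a mild enlargement of $\mathcal S$ by generators of $\rmod R$ together with a check that the resulting filtrations still lie inside $\mathcal C$. The principal obstacle in (ii) is producing the cardinality bound on precovers, which is the genuine content of Enochs' theorem; once that bound is in hand, the minimality-plus-direct-limit step is essentially formal.
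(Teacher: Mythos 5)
For part (i), the statement of what the Eklof--Trlifaj construction delivers is not correct, and this creates a genuine gap. As recalled in Theorem~\ref{abundance_special}, the construction produces for each $M$ a special \emph{preenvelope} sequence $0 \to M \to B \to A \to 0$ with $A$ an $\mathcal S$-filtered module and $B \in \mathcal S^\perp$. To convert this into a precover one applies Salce's lemma: start from a presentation $0 \to K \to F \to M \to 0$ with $F$ projective, take a special $\mathcal S^\perp$-preenvelope of $K$, and push out. The resulting surjection $P \to M$ is a special precover, but $P$ sits in an exact sequence $0 \to F \to P \to A \to 0$, so $P$ is a $(\{R\}\cup\mathcal S)$-filtered module, not an $\mathcal S$-filtered one, unless $R$ itself already lies in $\Filt{\mathcal S}$ --- this is exactly the role of the hypothesis $R \in \mathcal S$ in the last clause of Theorem~\ref{abundance_special}. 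Your proposed ``mild enlargement of $\mathcal S$ by generators'' cannot in general leave the class unchanged: take $R = \mathbb Z$ and $\mathcal S = \{\mathbb Z/p\mathbb Z\}$, so that $\mathcal C = \Filt{\mathcal S}$ consists only of $p$-groups. The pushout $P$ always contains a nonzero free submodule and hence never lies in $\mathcal C$ for $M \neq 0$, yet $\mathcal C$ is deconstructible and therefore precovering by the theorem. Thus the cotorsion-pair machinery produces ${}^{\perp}(\mathcal C^{\perp})$-precovers, which may be a strictly larger class. The actual proof, due to \v S\v tov\'\i\v cek, is of a different nature: it runs a small-object-style transfinite construction directly against the generating set $\mathcal S$, staying inside $\Filt{\mathcal S}$ at every stage, and uses the Hill lemma to control the filtrations along the way. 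That argument cannot be recovered from the Eklof--Trlifaj/Salce route.

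Part (ii) is closer to the mark and you correctly identify the cardinality estimates as the substance of Enochs' theorem. Two caveats: minimality is taken with respect to a factorization preorder on precovers (with direct-limit closure ensuring chains in this preorder have lower bounds, so Zorn applies), not by picking a precover of minimal cardinality; and the $\omega$-chain of $g$'s is a useful picture but does not by itself produce a contradiction from a cardinality-minimal precover, since the direct limit $A_\infty$ need not be smaller than $A$.
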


We note that though verified in many particular instances, the validity of the converse implications in both statements of Theorem \ref{abundance_precov} remains an open problem in general.

\medskip
One can obtain special precovering and special preenveloping classes by employing the notion of a complete cotorsion pair due to Salce \cite{S}:

\begin{definition}\label{Salce}
A pair of classes of modules $\mathfrak C = (\mathcal A, \mathcal B)$ is a \emph{cotorsion pair} provided that  
\begin{enumerate}
\item $\mathcal A = {}^\perp \mathcal B = \{ A \in \mbox{Mod-}R \mid \mbox{Ext}^1_R(A,B) = 0 \mbox{ for all } B \in \mathcal B \}$, and
\item $\mathcal B = \mathcal A ^\perp = \{ B \in \mbox{Mod-}R \mid \mbox{Ext}^1_R(A,B) = 0 \mbox{ for all } A \in \mathcal A \}$. 
\end{enumerate}
In this case $\mathcal A$ is closed under transfinite extensions. If moreover 

(3) For each module $M$, there exists an exact sequences $0 \to M \to B \to A \to 0$ with $A \in \mathcal A$ and $B \in \mathcal B$, 

then $\mathfrak C$ is called \emph{complete}. It that case, for each module $M$ there also exists an exact sequences $0 \to B^\prime \to A^\prime \to M \to 0$ with $A^\prime \in \mathcal A$ and $B^\prime \in \mathcal B$, whence $\mathcal A$ is a special precovering class and $\mathcal B$ a special preenveloping class.
\end{definition}

The ubiquity of special approximations comes from the following (cf.\ \cite{ET})

\begin{theorem}\label{abundance_special}
Let $\mathcal S$ be any set of modules. Then the cotorsion pair $(^\perp(\mathcal S ^\perp),\mathcal S ^\perp)$ is complete. In fact, for each module $M$ there exists an exact sequence $0 \to M \to B \to A \to 0$ where $B \in \mathcal S ^\perp$ and $A$ is $\mathcal S$-filtered.

Moreover, if $R \in \mathcal S$, then $\mathcal A = {}^\perp(\mathcal S ^\perp)$ coincides with the class of all direct summands of $\mathcal S$-filtered modules, and $\mathcal A$ is deconstructible.   
\end{theorem}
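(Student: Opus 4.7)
The pair $({}^\perp(\mathcal S^\perp), \mathcal S^\perp)$ is automatically a cotorsion pair (it is the one cogenerated by $\mathcal S$), so only property (3) of Definition \ref{Salce} needs verification. My plan is to first build the special $\mathcal S^\perp$-preenveloping sequence of (3) directly by a transfinite induction, then dualise via Salce's lemma to obtain the special $\mathcal A$-precovering sequence, and finally deduce the summand description and deconstructibility of $\mathcal A$ when $R \in \mathcal S$.

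For (3), I would fix $M$, set $M_0 = M$, and construct a continuous chain $(M_\alpha)_{\alpha \leq \sigma}$ as follows. At a successor stage, having built $M_\alpha$, I would enumerate all pairs $(S,\xi)$ with $S \in \mathcal S$ and $\xi \in \Ext{1}{R}{S}{M_\alpha}$, choose a representing short exact sequence $0 \to M_\alpha \to E_\xi \to S \to 0$ for each $\xi$, and let $M_{\alpha+1}$ be the pushout amalgamating all inclusions $M_\alpha \hookrightarrow E_\xi$ along the identity of $M_\alpha$. Then $M_{\alpha+1}/M_\alpha$ is a direct sum of copies of members of $\mathcal S$ (hence $\mathcal S$-filtered by Example \ref{cases}(1)), and every $\xi$ in the collection becomes trivial when pushed to $M_{\alpha+1}$. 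The main obstacle is termination: I would take $\sigma$ to be a regular cardinal strictly greater than $|R|$ and than every $|S|$ for $S \in \mathcal S$ -- possible because $\mathcal S$ is a set. Then for any $S \in \mathcal S$ and any $\xi \in \Ext{1}{R}{S}{M_\sigma}$, representing $\xi$ via a projective presentation $0 \to K \to P \to S \to 0$ with $|K| < \sigma$, the corresponding map $K \to M_\sigma$ has image contained in $M_\alpha$ for some $\alpha < \sigma$ (by regularity of $\sigma$ and continuity of the chain), so $\xi$ is the push-forward of an extension over $M_\alpha$, which was explicitly killed at stage $\alpha+1$. Hence $B := M_\sigma \in \mathcal S^\perp$, and $A := B/M$ is $\mathcal S$-filtered by concatenating the successor-stage filtrations.

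For the special precovering sequence I would apply Salce's lemma: choose a free presentation $0 \to K \to F \to M \to 0$, apply the previous step to $K$ to obtain $0 \to K \to B' \to A' \to 0$ with $B' \in \mathcal S^\perp$ and $A'$ $\mathcal S$-filtered, and take the pushout along $K \hookrightarrow F$. This produces $0 \to B' \to E \to M \to 0$ together with $0 \to F \to E \to A' \to 0$; since $\mathcal A$ contains projectives and $\mathcal S$-filtered modules (by Eklof's lemma, as recorded in Definition \ref{Salce}) and is closed under extensions, $E \in \mathcal A$. If moreover $R \in \mathcal S$, then $F$ is itself $\{R\}$-filtered, so $E$ is $\mathcal S$-filtered as well. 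For any $A \in \mathcal A$, this construction produces $0 \to B' \to E \to A \to 0$ which splits (as $\Ext{1}{R}{A}{B'} = 0$), exhibiting $A$ as a direct summand of the $\mathcal S$-filtered module $E$; the reverse inclusion follows from Eklof's lemma together with closure of $\mathcal A$ under direct summands. Deconstructibility of $\mathcal A$ then follows from a standard Hill-lemma argument, which refines any $\mathcal S$-filtration densely enough that every direct summand inherits a filtration by $<\kappa$-presented members of $\mathcal A$ for a suitable cardinal $\kappa$.
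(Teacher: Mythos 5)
The paper does not prove Theorem~\ref{abundance_special}; it cites Eklof--Trlifaj~\cite{ET} (see also~\cite[\S6]{GT2}), and your argument is precisely the standard proof given there. The transfinite iterated-pushout construction (the small-object argument), the termination at a regular $\sigma$ exceeding $\card{R}$ and $\card{S}$ for all $S\in\mathcal S$ so that a $<\sigma$-sized kernel of a projective presentation lands in some $M_\alpha$, Salce's lemma for the dual sequence, and the observation that when $R\in\mathcal S$ the middle term $E$ is $\mathcal S$-filtered and hence every $A\in\mathcal A$ splits off it, are all exactly the steps in~\cite{ET}. The only place you compress a genuinely nontrivial piece is the final deconstructibility claim: that direct summands of $\mathcal S$-filtered modules again form a deconstructible class requires the Hill lemma together with a Kaplansky-type argument producing, inside a given $\mathcal S$-filtered module $N$ with summand $A$, a $\kappa$-filtration of $N$ by $\mathcal S$-filtered submodules compatible with the projection onto $A$; it is not automatic that ``every direct summand inherits a filtration,'' and the precise statement needed is~\cite[Theorem~7.13]{GT2}. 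Apart from that compression, the proof is correct and matches the reference.
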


A further tool for constructing precovering classes comes from \cite{ST1}:

\begin{theorem}\label{abundance_complex} Assume that $\kappa$ is an infinite cardinal such that each right ideal of $R$ is $\leq \kappa$-generated. Let $0 \leq n < \omega$ and $\mathcal C$ be any $\kappa^+$-deconstructible class of modules. Then the class of all modules possessing a $\mathcal C$-resolution of length $\leq n$ is also $\kappa^+$-deconstructible.         
\end{theorem}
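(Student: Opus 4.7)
I will argue by induction on $n$. The case $n = 0$ is the hypothesis. For the inductive step, write $\mathcal D_k$ for the class of all modules admitting a $\mathcal C$-resolution of length $\leq k$, and note the recursive characterisation
\[
M \in \mathcal D_n \iff \text{there exists a SES } 0 \to K \to C \to M \to 0 \text{ with } C \in \mathcal C,\ K \in \mathcal D_{n-1}.
\]
By the inductive hypothesis $\mathcal D_{n-1}$ is $\kappa^+$-deconstructible; fix generating sets $\mathcal S \subseteq \mathcal C^{<\kappa^+}$ for $\mathcal C$ and $\mathcal S' \subseteq \mathcal D_{n-1}^{<\kappa^+}$ for $\mathcal D_{n-1}$. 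Let $\mathcal T$ be the set of all isomorphism classes of $\leq\kappa$-presented modules in $\mathcal D_n$; there are at most $2^\kappa$ such classes, so $\mathcal T$ is indeed a set. The aim is to show $\mathcal D_n = \Filt{\mathcal T}$.

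The inclusion $\Filt{\mathcal T} \subseteq \mathcal D_n$ reduces to closure of $\mathcal D_n$ under transfinite extensions. I will establish this by a (sub-)induction on $n$ using the transfinite horseshoe lemma: given a filtration of $M$ with factors in $\mathcal D_n$, choose for each factor the top covering module in $\mathcal C$ from its resolution, glue the covers into a transfinite extension $N \in \mathcal C$ (permissible because $\mathcal C = \Filt{\mathcal S}$), and observe that the assembled kernel of $N \twoheadrightarrow M$ is filtered by the kernels appearing in the individual resolutions, hence lies in $\mathcal D_{n-1}$ by inductive hypothesis. This exhibits $M$ in $\mathcal D_n$.

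For the reverse inclusion $\mathcal D_n \subseteq \Filt{\mathcal T}$, fix $M \in \mathcal D_n$ together with a SES $0 \to K \to C \to M \to 0$ as above. Apply Hill's lemma to the $\mathcal S$-filtration of $C$ and to the $\mathcal S'$-filtration of $K$, obtaining $\kappa^+$-complete distributive sublattices $\mathcal H_C \subseteq 2^C$ and $\mathcal H_K \subseteq 2^K$: every interval in $\mathcal H_C$ (resp.\ $\mathcal H_K$) is $\mathcal S$-filtered (resp.\ $\mathcal S'$-filtered), and every $\leq\kappa$-sized subset of $C$ (resp.\ $K$) extends to a $\leq\kappa$-presented element of the lattice. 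By transfinite recursion I will build continuous chains $(C_\alpha)_{\alpha \leq \sigma}$ in $\mathcal H_C$ and $(K_\alpha)_{\alpha \leq \sigma}$ in $\mathcal H_K$ with $C_0 = K_0 = 0$, $C_\sigma = C$, $K_\alpha = K \cap C_\alpha$, and $C_{\alpha+1}/C_\alpha,\ K_{\alpha+1}/K_\alpha$ both $\leq\kappa$-presented. Setting $M_\alpha := C_\alpha/K_\alpha \hookrightarrow M$ produces a filtration of $M$ whose factors fit into short exact sequences $0 \to K_{\alpha+1}/K_\alpha \to C_{\alpha+1}/C_\alpha \to M_{\alpha+1}/M_\alpha \to 0$ with $C_{\alpha+1}/C_\alpha \in \mathcal C$ and $K_{\alpha+1}/K_\alpha \in \mathcal D_{n-1}$ by the Hill lattice properties, and $\leq\kappa$-presentation of the middle and left terms forces $\leq\kappa$-presentation of the right term; hence every factor lies in $\mathcal T$.

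The main obstacle is the matched back-and-forth construction of the pair $(C_\alpha, K_\alpha)$ at a successor stage: one enlarges $C_\alpha$ inside $\mathcal H_C$ to absorb a prescribed element of $C$, then enlarges the $K$-side inside $\mathcal H_K$ to contain the new intersection $K \cap C_\alpha^{\text{new}}$, then re-enlarges the $C$-side to cover the possibly larger $K$-side, and iterates $\omega$ times before closing. This procedure succeeds only because every right ideal of $R$ is $\leq\kappa$-generated: decomposing $R^{(\kappa)}$ by its finite-support submodules shows that then every submodule of $R^{(\kappa)}$ is $\leq\kappa$-generated, so every $\leq\kappa$-generated module is actually $\leq\kappa$-presented. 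Consequently the intersection $K \cap C_\alpha$, being a submodule of a $\leq\kappa$-presented module, stays $\leq\kappa$-presented at every stage, and the $\omega$-iterated closure remains inside the $\leq\kappa$-presented world. Once this cardinality control is in place, continuity at limit ordinals is immediate, closure under arbitrary sums in the Hill lattices ensures $C_\alpha, K_\alpha$ remain lattice elements, and the construction terminates with $C_\sigma = C$.
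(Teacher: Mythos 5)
The paper does not prove this theorem; it is quoted with a pointer to Sl\'avik--Trlifaj \cite{ST1}, so I am comparing your attempt against the expected shape of that argument rather than against a proof printed here.

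Your step (B) --- the inclusion $\mathcal D_n \subseteq \Filt{\mathcal T}$ obtained by running Hill's lemma simultaneously on the filtrations of $C$ and of $K$, and the observation that ``every right ideal of $R$ is $\leq\kappa$-generated'' forces every $\leq\kappa$-generated module (in particular every $K\cap C_\alpha$) to be $\leq\kappa$-presented, which keeps the $\omega$-iterated back-and-forth closure inside the $\leq\kappa$-presented world --- is essentially the standard route and is where the cardinality hypothesis is genuinely used. That part is fine in outline.

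The gap is in step (A), the inclusion $\Filt{\mathcal T}\subseteq\mathcal D_n$. You say: given a filtration of $M$ with factors in $\mathcal D_n$, pick a short exact sequence $0\to K_\alpha\to C_\alpha\to M_{\alpha+1}/M_\alpha\to 0$ with $C_\alpha\in\mathcal C$, $K_\alpha\in\mathcal D_{n-1}$ for each factor, ``glue the covers into a transfinite extension $N\in\mathcal C$,'' and observe that the assembled kernel is $\mathcal D_{n-1}$-filtered. But the gluing is not free: at a successor stage you must extend the constructed surjection $N_\alpha\twoheadrightarrow M_\alpha$ to a surjection $N_{\alpha+1}\twoheadrightarrow M_{\alpha+1}$ with $N_{\alpha+1}/N_\alpha\cong C_\alpha$, and this amounts to lifting $C_\alpha\to M_{\alpha+1}/M_\alpha$ through $M_{\alpha+1}\twoheadrightarrow M_{\alpha+1}/M_\alpha$ compatibly with $N_\alpha$. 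The obstruction lives in $\Ext^1_R(C_\alpha,M_\alpha)$ (or, after pulling back the given data, in $\Ext^2_R(C_\alpha,K_\alpha)$), and for an arbitrary $\kappa^+$-deconstructible class $\mathcal C$ there is no reason for it to vanish: the modules $C_\alpha$ are not assumed projective, $R$ need not lie in $\mathcal C$, and no Ext-orthogonality between $\mathcal C$ and the filtration factors is assumed. In the two instances quoted in the paper ($\mathcal C=\mathcal P_0$, $\mathcal C=\mathcal F_0$) the horseshoe step works for special reasons (projectivity in the first case, the long exact sequence for Tor in the second), but your ``sub-induction using the transfinite horseshoe lemma'' does not supply the required lift in general. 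You need either a genuine argument that such a lift can always be arranged (for instance by modifying the chosen resolutions of the factors, or by passing to special precovers coming from the complete cotorsion pair generated by $\mathcal S\cup\{R\}$ and tracking the Ext-conditions that make the lift exist), or a different proof of $\Filt{\mathcal T}\subseteq\mathcal D_n$ altogether. As it stands, this half of your argument does not go through.
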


\begin{example} Since each projective module is a direct sum of countably generated modules, if $0 \leq n < \omega$ and $\kappa$ is an infinite cardinal such that each right ideal of $R$ is $\leq \kappa$-generated, then the class $\mathcal P_n$ of all modules of projective dimension at most $n$ is $\kappa^+$-deconstructible. 

Similarly, the class $\mathcal F _0$ of all flat modules over any ring is $\kappa^+$-deconstructible where $\kappa$ is an infinite cardinal $\geq \card R$, cf.\ \cite{BEE}. Hence so is the class $\mathcal F _n$ of all modules of projective dimension at most $n$, for each $n < \omega$.    
\end{example}

\subsection{Limits of the approximation theory}

Theorems \ref{abundance_precov}, \ref{abundance_special} and \ref{abundance_complex} yield numerous approximation classes of modules. However, not all classes of modules closed under transfinite extensions are precovering. This surprising fact can be proved using locally free (tree) modules. 

\medskip
We will now present a full proof for the absolute case of $\aleph_1$-projective (= flat Mittag-Leffler) modules over any non-right perfect ring. The result was first proved in a different way in \cite{EGPT} for the particular case of $\aleph_1$-projective abelian groups. For countable non-right perfect rings, a proof was given in \cite{BS} (cf.\ also \cite{SaT} and \cite{ST1}). The following proof for general non-right perfect rings using tree modules is due to \v Saroch:  

\begin{theorem}\label{Sar_fml} Assume that $R$ is a non-right perfect ring and let $\mathcal F = \{ R \}$. Let $B$ be a Bass module for $\mathcal F$ which is not projective (see Example \ref{Bass_ex}). Then $B$ has no $\aleph_1$-projective precover. 

In particular, the class $\mathcal{FM}$ is not precovering, and hence not deconstructible.
\end{theorem}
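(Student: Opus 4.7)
The plan is to argue by contradiction. Suppose $f \colon M \to B$ is an $\aleph_1$-projective precover of $B$; since $R$ itself lies in $\mathcal{FM}$, every map $R \to B$ factors through $f$, forcing $f$ to be surjective. Invoking Remark \ref{rem_card_arit}, I choose an infinite cardinal $\kappa$ so large that $\kappa^\omega > |M|^{\aleph_0}$, and construct the tree module $L$ associated with $\kappa$ and the classical Bass direct system $(F_i = R,\, f_i \colon r \mapsto a_i r)_{i < \omega}$ from Example \ref{Bass_ex}. By Lemma \ref{L}, $L$ is locally $\{R\}$-free, and hence $\aleph_1$-projective.

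For each branch $\nu \in \mathrm{Br}(T_\kappa)$, the composition
\[
\pi_\nu \colon L \twoheadrightarrow L/D \cong B^{(\mathrm{Br}(T_\kappa))} \twoheadrightarrow B,
\]
in which the last arrow is the projection onto the $\nu$-th factor, is a map from an $\aleph_1$-projective module to $B$, so by the precover property $\pi_\nu = f h_\nu$ for some $h_\nu \colon L \to M$. Writing $y_{\nu, i} := h_\nu(1_{\nu, i})$ and tracing through Notation \ref{notat_tree}, I get two relations that drive the rest of the argument: (i) $f(y_{\nu, i}) = g_i(1)$, where $g_i \colon R \to B$ is the $i$-th structure map of the Bass system; and (ii) the tree identity $1_{\nu, i} - 1_{\nu, i+1} \cdot a_i = e_{\nu \restriction i} \in D \subseteq \Ker{\pi_\nu}$, which gives $y_{\nu, i} - y_{\nu, i+1} a_i \in \Ker{f}$ for every $i < \omega$.

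The hardest step is collapsing this cocycle data into a genuine cocone of the Bass system inside $M$. The strategy is a pigeonhole / tree-pruning argument on $T_\kappa$: the cardinal choice $\kappa^\omega > |M|^{\aleph_0}$ forces the map $\nu \mapsto (y_{\nu, i})_{i < \omega} \in M^\omega$ to have large fibres, and a further refinement exploiting that the defects $h_\nu(e_\tau) \in \Ker{f}$ lie in a set of cardinality at most $|M|$ lets one inductively thin the tree to a family of branches along which both the $y_{\nu, i}$ and the $h_\nu(e_{\nu \restriction i})$ stabilize. From such a stabilized family one extracts elements $(z_i)_{i < \omega}$ of $M$ satisfying $z_i = z_{i+1} a_i$ and $f(z_i) = g_i(1)$. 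The universal property of the direct limit $B = \varinjlim_i F_i$ then produces a homomorphism $\phi \colon B \to M$ with $f \phi = \mathrm{id}_B$, so $B$ is a direct summand of $M$. Since $\mathcal{FM}$ is closed under direct summands, $B$ becomes flat Mittag-Leffler; applying Lemma \ref{mitlef}(iii) to the Bass presentation of $B$ with test module $N = R$, the chain of principal left ideals $R a_0 \supseteq R a_1 a_0 \supseteq \cdots$ would have to stabilize, contradicting the choice of $B$ as a non-projective classical Bass module (Example \ref{Bass_ex}).

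The final statement is then automatic: since the countably presented module $B$ has no $\aleph_1$-projective precover, $\mathcal{FM}$ is not precovering, and by the contrapositive of Theorem \ref{abundance_precov}(i) it is not deconstructible either.
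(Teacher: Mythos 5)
Your overall strategy -- build the tree module $L$, use the precover property to factor the branch projections $\pi_\nu = f h_\nu$, and then extract a splitting of $f$ by a pigeonhole on the branches -- differs fundamentally from the paper's argument, and it contains a gap at the decisive step.

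The setup is fine: the relations $f(y_{\nu,i}) = g_i(1)$ and $y_{\nu,i} - y_{\nu,i+1}\,a_i = h_\nu(e_{\nu\restriction i}) \in \Ker f$ are correctly derived, and the endgame (a cocone $(z_i)$ with $z_i = z_{i+1}a_i$ and $f(z_i)=g_i(1)$ would indeed give a section of $f$, hence $B\in\mathcal{FM}$, hence $B$ projective by Kaplansky / Lemma~\ref{mitlef}, contradiction). But the tree-pruning step does not produce such a cocone. For a \emph{fixed} branch $\nu$, the sequence $(y_{\nu,i})_i$ and the defects $d_{\nu,i}:=h_\nu(e_{\nu\restriction i})$ are already determined; the pigeonhole merely finds a family of branches on which the pair $\bigl((y_{\nu,i})_i,(d_{\nu,i})_i\bigr)$ is \emph{constant}, and this gives nothing new -- you still only have $z_i - z_{i+1}a_i = d_i$ with $d_i\in\Ker f$ not necessarily zero. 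Having $d_i$ \emph{constant across branches} is not the same as having $d_i = 0$, and there is no visible mechanism to kill the defects. (Forcing $h_\nu(D)=0$ is exactly asking for a lift $B^{(\mathrm{Br}(T_\kappa))}\to M$, which is circular: its restriction to one summand is the splitting you are trying to produce.) So as written the argument does not prove the splitting, and I do not see how the sketched counting can be repaired to do so.

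The paper's proof avoids this issue entirely by never trying to build a cocone by hand. Instead it first pushes out the short exact sequence $0\to K\to F\to B\to 0$ along a special $\{L\}^\perp$-preenvelope $\eta\colon K\hookrightarrow E$ with $\{L\}$-filtered cokernel (Theorem~\ref{abundance_special}); the pushout module $P$ is then still in $\mathcal{FM}$, and applying the precover property once to $P\to B$ yields a map $h\colon P\to F$. Restricting gives a short exact sequence $0\to \mathrm{Im}(h')\to\mathrm{Im}(h)\to B\to 0$ with $\mathrm{Im}(h')\in L^\perp$. The crucial step is then a cardinality estimate: applying $\Hom R{-}{\mathrm{Im}(h')}$ to the tree-module sequence $0\to D\to L\to B^{(2^\kappa)}\to 0$ gives a surjection $\Hom R D{\mathrm{Im}(h')}\twoheadrightarrow \Ext 1RB{\mathrm{Im}(h')}^{2^\kappa}$, and the domain has cardinality at most $2^\kappa$, forcing $\Ext 1RB{\mathrm{Im}(h')}=0$. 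This makes the restricted sequence split, hence $f$ splits, hence $B$ is projective, a contradiction. The point is that the tree module's huge exponent $2^\kappa$ is used to collapse an $\Ext$ group, not to pigeonhole elements of $M$; the pushout and the $\{L\}^\perp$-preenvelope are what translate the precover hypothesis into information about $\Ext 1 R B-$. You would need to incorporate something like this $\Ext$-vanishing mechanism to close the gap.
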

\begin{proof} Assume there exists a $\mathcal{FM}$-precover $f : F \to B$ of $B$. Consider the short exact sequence $0 \to K \hookrightarrow	F \overset{f}\to B \to 0$ where $K = \mbox{Ker}(f)$. Let $\kappa$ be an infinite cardinal such that $\card R \leq\kappa$ and $\card K \leq 2^\kappa = \kappa ^\omega$ (see Remark \ref{rem_card_arit}). 

Consider the tree module corresponding to $T_\kappa$, $\mathcal F$ and $B$. By Lemma \ref{tree_module_L}, we have the tree module short exact sequence 
$$0 \to D \hookrightarrow L \to {B^{(2^\kappa)}} \to 0,$$ 
with $L \in \mathcal{FM}$ and $D$ a free module of rank $\kappa$. Clearly, $L\in\mathcal P _1$. 

\medskip
Let $\eta:K \hookrightarrow E$ be a $\{L\}^\perp$-preenvelope of $K$ with an $\{ L\}$-filtered cokernel $C$ (cf.\ Theorem \ref{abundance_special}). Consider the pushout
$$\begin{CD} 
    @. 0 @. 0 @. @. \\
    @. @VVV  @VVV @.   @. \\
	0 @>>> K @>{\subseteq}>>	F @>{f}>> B @>>> 0 	\\
	@. @V{\eta}VV			@V{\varepsilon}VV	  @|   @. \\
	0 @>>> E @>{\subseteq}>>	P @>{g}>> B  @>>> 0 \\
	@. @VVV			@VVV	  @.   @. \\
	  @. \mbox C @>{\cong}>> \mbox C @. @. \\
    @. @VVV  @VVV @.   @. \\
    @. 0 @. \hbox{ }0 @. @.
\end{CD}$$
Then $P\in\mathcal{FM}$. Since $f$ is an $\mathcal{FM}$-precover, there exists $h: P\to F$ such that $fh = g$. Then $f = g\varepsilon = fh\varepsilon$, whence $K+\mbox{Im}(h) = F$. Let $h^\prime = h\restriction E$. Then $h^\prime:E\to K$ and $\mbox{Im}(h^\prime) = K \cap \mbox{Im}(h)$.

Consider the restricted short exact sequence
$$\begin{CD} 0 @>>> \mbox{Im}(h^\prime) @>{\subseteq}>> \mbox{Im}(h) @>{f\restriction \mbox{Im}(h)}>> B @>>> 0.\end{CD}$$
As $E\in L^\perp$ and $L\in\mathcal P_1$, also $\mbox{Im}(h^\prime)\in L^\perp$. 

Applying $\Hom R{-}{\mbox{Im}(h^\prime)}$ to the tree-module short exact sequence above, we obtain the exact sequence 
$$\Hom RD{\mbox{Im}(h^\prime)} \to \Ext 1RB{\mbox{Im}(h^\prime)}^{2^\kappa}\to 0$$
where the first term has cardinality at most $(\card K)^\kappa \leq 2^\kappa$, so $\mbox{Im}(h^\prime)\in B^\perp$ (otherwise, the second term would have cardinality at least $2^{2^\kappa}$).  

Then $f\restriction \mbox{Im}(h)$ splits, and so does the $\mathcal{FM}$-precover $f$. It follows that $B \in \mathcal{FM}$, whence $B$ is projective, a contradiction.
\end{proof}

Theorem \ref{Sar_fml} is a special instance (for $\mathcal F = \{ R \}$) of the following more general result proved in \cite{Sa2}. We will present its applications in the following section. 

\begin{theorem}\label{Sar_gen}  Let $\mathcal F$ be a class of countably presented modules, and $\mathcal L$ the class of all locally $\mathcal F$-free modules. Let $B$ be a Bass module for $\mathcal F$ such that $B$ is not a direct summand in a module from $\mathcal L$. 

Then $B$ has no $\mathcal L$-precover. In particular, the class $\mathcal L$ is not precovering, and hence not deconstructible. 
\end{theorem}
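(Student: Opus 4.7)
My plan is to mimic the proof of Theorem~\ref{Sar_fml} step for step, replacing $\mathcal{FM}$ by $\mathcal L$ throughout. Suppose for a contradiction that $f\colon F\to B$ is an $\mathcal L$-precover and set $K=\Ker f$. Invoking Remark~\ref{rem_card_arit}, I fix an infinite cardinal $\kappa$ with $\card R\leq\kappa$ and $\card K\leq 2^\kappa=\kappa^\omega$. Using the fixed Bass presentation of $B$, I form the tree module $L$ from Notation~\ref{notat_tree}. Lemma~\ref{L} gives $L\in\mathcal L$, and Lemma~\ref{tree_module_L} furnishes the tree-module short exact sequence
\[
0\to D\to L\to B^{(2^\kappa)}\to 0
\]
with $D$ a direct sum of modules from $\mathcal F$; since $\card R\leq\kappa$, the module $D$ is $\kappa$-generated.

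Next, I apply Theorem~\ref{abundance_special} to a set $\mathcal S$ containing $L$ (plus, if needed, a companion module; see the obstacle below) to obtain a special $\mathcal S^\perp$-preenvelope $\eta\colon K\hookrightarrow E$ whose cokernel $C$ is $\mathcal S$-filtered. Choosing $\mathcal S\subseteq\mathcal L$, Lemma~\ref{closed_under} puts $C$ in $\mathcal L$. Forming the pushout of $\eta$ along the inclusion $K\hookrightarrow F$ reproduces the diagram of the proof of Theorem~\ref{Sar_fml}; its middle column is an extension $0\to F\to P\to C\to 0$ with $F,C\in\mathcal L$, so Lemma~\ref{closed_under} again gives $P\in\mathcal L$. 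The $\mathcal L$-precover property of $f$ now supplies $h\colon P\to F$ with $fh=g$, whence $K+\mbox{Im}(h)=F$, $h'=h\restriction E\colon E\to K$, $\mbox{Im}(h')=K\cap\mbox{Im}(h)$, and the restricted short exact sequence
\[
0\to\mbox{Im}(h')\to\mbox{Im}(h)\to B\to 0.
\]

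Granted that $\mbox{Im}(h')\in L^\perp$, the cardinality argument of Theorem~\ref{Sar_fml} transfers verbatim. Applying $\Hom R-{\mbox{Im}(h')}$ to the tree-module sequence yields a surjection $\Hom R D{\mbox{Im}(h')}\twoheadrightarrow\Ext 1 R B{\mbox{Im}(h')}^{2^\kappa}$, whose source has cardinality at most $(\card K)^\kappa\leq(2^\kappa)^\kappa=2^\kappa$. A nonzero $\Ext 1 R B{\mbox{Im}(h')}$ would force the target to have cardinality at least $2^{2^\kappa}>2^\kappa$, so $\Ext 1 R B{\mbox{Im}(h')}=0$. The restricted sequence then splits, producing a section of $f$, and $B$ is exhibited as a direct summand of $F\in\mathcal L$, contradicting the hypothesis.

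The step I expect to be the main obstacle is the assertion $\mbox{Im}(h')\in L^\perp$. In the proof of Theorem~\ref{Sar_fml} this was secured by $L\in\mathcal P_1$, but the tree module built from an arbitrary class of countably presented modules need not have projective dimension at most one, so the argument does not transfer directly. This is where the proof must genuinely go beyond the absolute case and is the technical heart of~\cite{Sa2}; a plausible route is to enlarge $\mathcal S$ in the preenvelope step (for instance by adjoining a first syzygy of $L$) so that $E$ satisfies richer Ext-vanishing and $\mathcal S^\perp$ becomes closed under the specific submodule inclusion $\mbox{Im}(h')\hookrightarrow E$ produced by the pushout.
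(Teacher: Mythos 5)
The paper itself does not prove Theorem~\ref{Sar_gen}: it only states it as a result from~\cite{Sa2}, and proves the special case Theorem~\ref{Sar_fml} (for $\mathcal F=\{R\}$). So there is no in-paper proof to compare against. Your proposal faithfully transposes every step of the proof of Theorem~\ref{Sar_fml} that does transfer: the choice of $\kappa$, the tree module short exact sequence (with $D$ now a direct sum of $\kappa$ many countably presented modules from $\mathcal F$, still of cardinality $\leq\kappa$, so the cardinality bound $\lvert\Hom{R}{D}{\mbox{Im}(h')}\rvert\leq 2^\kappa$ goes through), the pushout, the precover factorization $h$, the restricted sequence $0\to\mbox{Im}(h')\to\mbox{Im}(h)\to B\to 0$, and the final contradiction against the hypothesis that $B$ is not a direct summand of a module in $\mathcal L$. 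You have also correctly located the one step that does not transfer, namely $\mbox{Im}(h')\in L^\perp$, which in Theorem~\ref{Sar_fml} was bought by $L\in\mathcal P_1$.

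However, the repair you sketch does not close the gap, and the diagnosis that accompanies it is slightly mislocated. In the proof of Theorem~\ref{Sar_fml}, $\mbox{Im}(h')$ enters $L^\perp$ because it is an epimorphic image of $E\in L^\perp$ under $h'$, and $\{L\}^\perp$ is closed under quotients precisely because $\mbox{Ext}^2_R(L,-)=0$; it is not a matter of $\{L\}^\perp$ being closed under the inclusion $\mbox{Im}(h')\hookrightarrow E$. If you enlarge $\mathcal S$ by a first syzygy $\Omega L$ (or by all syzygies), you upgrade the preenvelope condition to $\mbox{Ext}^i_R(L,E)=0$ for the corresponding range of $i$, but the long exact sequence applied to $0\to N\to E\to\mbox{Im}(h')\to 0$ (with $N=\Ker{h'}$) gives $0\to\mbox{Ext}^1_R(L,\mbox{Im}(h'))\to\mbox{Ext}^2_R(L,N)$, and you have no control over $\mbox{Ext}^2_R(L,N)$ for the uncontrolled submodule $N$ of $E$: knowing $\mbox{Ext}^2_R(L,E)=0$ says nothing about $\mbox{Ext}^2_R(L,N)$. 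So adjoining syzygies enriches the Ext-vanishing of $E$ itself but does not propagate it to the quotient $\mbox{Im}(h')$ when $\pd L>1$. The genuine content of~\cite{Sa2} is exactly in finding a substitute for this mechanism, and your proof as written stops short of that; you would need to look at~\cite{Sa2} to fill in this step.
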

   
\subsection{Tilting approximations and locally $T$-free modules}

The model case of flat Mittag-Leffler modules discussed in the previous section is actually a $0$-dimensional instance of a more general phenomenon related to tilting. This relation was first noticed in \cite{ST1}, and fully developed and generalized in \cite{AST}. 

In order to formulate the key results, we recall the notion of an (infinitely generated) tilting module. For more details, we refer e.g.\ to \cite[Part III]{GT2}. 

For a module $T$, denote by $\mbox{Add}(T)$ (resp.\ $\mbox{add}(T))$ the class of all direct summands of arbitrary (resp.\ finite) direct sums of copies of $T$. 

\begin{definition}\label{tilt} A module $T$ is \emph{tilting} provided that
\begin{itemize}
\item[\rm{(T1)}] $T$ has finite projective dimension,
\item[\rm{(T2)}] $\mbox{Ext}^i_R(T,T^{(\kappa)}) = 0$ for all $1 \leq i < \omega$ and all cardinals $\kappa$, and
\item[\rm{(T3)}] there exist $r < \omega$ and an exact sequence $0 \to R \to T_0  \to \dots \to T_r \to 0$ where $T_i \in \Add T$ for each $i \leq r$.
\end{itemize}

\noindent The class 
$$\mathcal T _T = T^{\perp_\infty} = \{ B \in \mbox{Mod-}R \mid \mbox{Ext}^i_R(T,B) = 0 \mbox{ for all } 0 < i < \omega \}$$ is called the \emph{(right) tilting class}, $\mathcal A _T = {}^\perp \mathcal T _T$ the \emph{left tilting class}, and the (complete) cotorsion pair $\mathfrak C _T = (\mathcal A _T, \mathcal T _T)$ the \emph{tilting cotorsion pair}, induced by $T$.  

If $T$ has projective dimension $\leq n$, then the tilting module $T$ is called \emph{$n$-tilting}, and similarly for $\mathcal T _T$, $\mathcal A _T$, and $\mathfrak C _T$. 
\end{definition}

It is easy to see that $0$-tilting modules $T$ coincide with (not necessarily finitely generated) projective generators, whence $\mathcal A _T = \mathcal P _0$ and $\mathcal T _T = \rmod R$. 

Also, for each tilting module $T$, we have $\Add T = \mathcal A _T \cap \mathcal B _T$ (this is the \emph{kernel} of the tilting cotorsion pair $\mathfrak C _T$), and the right tilting class $\mathcal B _T$ is \emph{definable}, that is, $\mathcal B _T$ is closed under direct limits, products and pure submodules.   

\medskip
Tilting theory originated in the setting of finitely generated modules over finite dimensional algebras, but many of its aspects extend to the general setting of modules over arbitrary rings. Such extension is especially interesting for commutative rings, because all finitely generated tilting modules over a commutative ring are projective. For a recent classification of tilting classes over commutative rings, we refer to \cite{APST} and \cite{HS}. 

Though the left tilting class $\mathcal A _T$ is always special precovering and the right tilting class $\mathcal T _T$ special preenveloping, one can employ tilting modules to produce non-precovering classes of modules, namely the classes of locally $T$-free modules. The construction generalizes the base case of $T = R$, where locally $T$-free modules coincide with the flat Mittag-Leffler ones.

Before explaining the construction in more detail, we recall basic facts of infinite dimensional tilting theory over arbitrary rings:     

\begin{theorem}\label{chart} 
\begin{enumerate}
\item Let $\mathfrak C = (\mathcal A,\mathcal B)$ be a cotorsion pair. Then $\mathfrak C$ is tilting, iff $\mathcal A \subseteq \mathcal P _n$ for some $n < \omega$, and $\mathcal B$ is closed under arbitrary direct sums. 
\item Right tilting classes $\mathcal T$ in $\rmod R$ coincide with the classes of finite type, i.e., the classes of the form $\mathcal S ^{\perp_\infty}$ where $\mathcal S$ consists of strongly finitely presented modules of bounded projective dimension. Such class $\mathcal T$ is $n$-tilting, iff $\mathcal S \subseteq \mathcal P _n$.    

In particular, each left tilting class is $\aleph_1$-deconstructible.     
\end{enumerate}
\end{theorem}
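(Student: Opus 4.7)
The plan is to prove (1) and (2) in coordination, with the forward direction of (2) --- the \emph{finite type} theorem --- serving as the main technical engine. I would first dispatch the converse of (1). Given a cotorsion pair $\mathfrak C = (\mathcal A, \mathcal B)$ with $\mathcal A \subseteq \mathcal P_n$ and $\mathcal B$ closed under arbitrary direct sums, the goal is to produce a tilting module $T$ by an iterated Salce construction. Applying Theorem \ref{abundance_special} to a set of $<\kappa$-presented generators of $\mathcal A$ yields a special $\mathcal B$-preenvelope $0 \to R \to B_0 \to A_1 \to 0$ with $B_0 \in \mathcal A \cap \mathcal B$ and $A_1 \in \mathcal A$. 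Iterating on each $A_i$ produces a coresolution $0 \to R \to B_0 \to B_1 \to \cdots \to B_n \to 0$ which terminates at stage $n$ because $\mathcal A \subseteq \mathcal P_n$ forces the final cokernel into $\mathcal A \cap \mathcal B$. Then $T := B_0 \oplus \cdots \oplus B_n$ satisfies (T3) by construction, (T1) from $\mathcal A \subseteq \mathcal P_n$, and (T2) from the $B_i \in \mathcal B$ combined with closure of $\mathcal B$ under direct sums. Conversely, if $T$ is tilting, (T1) combined with the description of $\mathcal A_T$ as the class of summands of modules filtered by $\{R\} \cup \Add T$ yields $\mathcal A_T \subseteq \mathcal P_n$, and direct-sum closure of $\mathcal B_T = \mathcal S^{\perp_\infty}$ follows from part (2) since $\mathrm{Ext}^i_R(S,-)$ commutes with direct sums when $S$ is strongly finitely presented.

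For (2), the converse direction is short: given a set $\mathcal S \subseteq \rfmod R$ of modules of projective dimension at most $n$, the class $\mathcal T := \mathcal S^{\perp_\infty}$ is closed under direct sums because $\mathrm{Ext}^i_R(S,-)$ commutes with direct sums for $S$ strongly finitely presented, and the cotorsion pair $({}^\perp \mathcal T, \mathcal T)$ is complete via the iterated-$\mathrm{Ext}^i$ version of Theorem \ref{abundance_special}. Moreover ${}^\perp \mathcal T \subseteq \mathcal P_n$, since its generators are $\mathcal S$-filtered and $\mathcal S \subseteq \mathcal P_n$. Part (1) then upgrades this to $n$-tilting. The forward direction --- that every right tilting class has the form $\mathcal S^{\perp_\infty}$ with $\mathcal S$ strongly finitely presented of bounded projective dimension --- is the main obstacle. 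My plan is to establish that $\mathcal B_T$ is definable: closure under products is automatic, closure under pure submodules follows from a standard Ext-vanishing against pure-injective envelopes, and closure under direct limits is the delicate step, requiring control of $\mathrm{Ext}^i_R(T,-)$ on direct systems in $\mathcal B_T$; this is where genuine pure-injectivity and model-theoretic input enters. Once definability is in hand, the Crawley-Boevey--Herzog correspondence identifies $\mathcal B_T$ as $\mathcal S^{\perp_\infty}$ for some $\mathcal S \subseteq \mathcal A_T \cap \rfmod R$, with $\mathcal S \subseteq \mathcal P_n$ courtesy of the containment from part (1).

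The final assertion, $\aleph_1$-deconstructibility of each left tilting class, follows as a corollary: the generating set $\mathcal S$ consists of strongly finitely presented, hence countably presented, modules, so the iterated-$\mathrm{Ext}^i$ version of Theorem \ref{abundance_special} realizes $\mathcal A_T = {}^\perp(\mathcal S^{\perp_\infty})$ as the class of summands of modules filtered by elements of a set of $<\aleph_1$-presented modules. The principal technical difficulty throughout is the finite type theorem; every other step is an organized application of the approximation machinery developed in Section 1.3.
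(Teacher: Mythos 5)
The paper does not prove Theorem~\ref{chart}; it is stated as a recalled fact of infinite dimensional tilting theory with the reference \cite[Part III]{GT2}, so there is no in-paper proof to compare against. Your outline reproduces the standard architecture of the Angeleri H\"ugel--Coelho and Bazzoni--Herbera--\v S\v tov\'\i\v cek results, and you correctly isolate the finite-type theorem in part (2) as the principal difficulty, but several steps are not merely abbreviated --- they are genuinely missing.

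In the converse of (1), the iterated Salce construction presupposes both completeness and hereditariness of $\mathfrak C = (\mathcal A, \mathcal B)$, and neither is among the hypotheses. You invoke Theorem~\ref{abundance_special} applied ``to a set of $<\kappa$-presented generators of $\mathcal A$'', but that theorem produces the cotorsion pair $({}^\perp(\mathcal S^\perp), \mathcal S^\perp)$, which coincides with $\mathfrak C$ only if $\mathcal A$ is actually generated by a set with $\mathcal S^\perp = \mathcal B$; establishing that $\mathcal A \subseteq \mathcal P_n$ together with closure of $\mathcal B$ under direct sums forces $\mathfrak C$ to be generated by a set is itself one of the substantive parts of the theorem, not a free consequence of the deconstructibility of $\mathcal P_n$. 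Likewise, the termination claim ``$\mathcal A \subseteq \mathcal P_n$ forces the final cokernel into $\mathcal A \cap \mathcal B$'' is a dimension-shift that requires $\Ext iR{\mathcal A}{B_j} = 0$ for all $i \geq 1$, i.e.\ hereditariness, and so does the verification of (T2), which concerns all higher Ext groups, not just $\Ext 1RTT^{(\kappa)}$. You would also need to check that the module $T$ you build induces the \emph{given} pair $(\mathcal A,\mathcal B)$, not merely some tilting pair. None of these points is addressed, and together they constitute a real gap rather than an omission of routine detail.

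On part (2), your acknowledgment that the forward direction is the hard part is apt, but the sketch itself is questionable. The assertion that ``closure under pure submodules follows from a standard Ext-vanishing against pure-injective envelopes'' does not correspond to any argument I recognize; for a $1$-tilting class, closure of $\mathcal B_T$ under pure submodules is roughly equivalent to closure under direct limits (via the fact that pure-exact sequences are direct limits of split ones) and is exactly what is being proved, not a standalone lemma. Similarly, ``the Crawley-Boevey--Herzog correspondence'' is not the usual tool: once one knows $\mathcal B_T$ is definable, passing to $\mathcal S^{\perp_\infty}$ for $\mathcal S \subseteq \mathcal A_T \cap \rfmod R$ is done in Bazzoni--Herbera and Bazzoni--\v S\v tov\'\i\v cek via Mittag-Leffler/countable-type arguments, not by invoking a black-box duality. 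The overall shape of your plan is right, but as written it skips the two places in part (1) where real work is required and leans on non-theorems in part (2).
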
 

The largest possible choice for the class $\mathcal S$ in Theorem \ref{chart}.2 is $\mathcal S = \mathcal A \cap \rfmod R$ where $\mathcal A = {}^\perp \mathcal T$. Then $\mathcal S \subseteq \mathcal A \subseteq \varinjlim \mathcal S$ (where $\varinjlim \mathcal S$ denotes the class of all direct limits of modules from $\mathcal S$). 

In the $0$-tilting case, the largest choice is $\mathcal S = \mathcal P _0 ^{< \omega}$ (the class of all finitely generated projective modules), whence $\varinjlim \mathcal S = \mathcal F _0$ (the class of all flat modules). Of course, we also have $\mathcal P _0 \subseteq \mathcal{FM} \subseteq \mathcal F _0$, and the question is how to generalize the notion of a flat Mittag-Leffler module to the $n$-tilting setting for $n > 0$. The answer is given by the following definition:
   
\begin{definition}\label{locallytf} Consider the particular setting of Notation \ref{notat_tree} when $T$ is a tilting module and $\mathcal F=\mathcal A _T^{\leq \omega}$ (so $\mathcal C= \mathcal F$). 

A module is \emph{locally $T$-free} provided that $M$ is locally $\mathcal F$-free, i.e., $M$ admits a dense system of countably presented submodules from $\mathcal A$. We will denote by $\mathcal L _T$ the class of all locally $T$-free modules.
\end{definition}

So if $T = R$, then the locally $T$-free modules coincide with the $\aleph_1$-projective (= flat Mittag-Leffler) modules. By Theorem \ref{Sar_fml}, these modules form a (pre-) covering class, iff $R$ is a right perfect ring, iff $\mathcal P _0 = \mathcal F _0$. This was substantially generalized in \cite{AST} as follows: 

\begin{theorem}\label{general} Let $T$ be a tilting module and $\mathcal A _T$ be the corresponding left tilting class 
(so $\mathcal A _T = \Filt{(\mathcal C _T)}$ where $\mathcal C _T = \mathcal A _T ^{\leq \omega}$ by Theorem \ref{chart}.2). Let $\mathcal L _T$ denote the class of all locally $T$-free modules. Then the following are equivalent:
\begin{enumerate}
\item $\mathcal L _T$ is a (pre-) covering class;
\item Each Bass module for $\mathcal C _T$ is contained in $\mathcal C _T$;
\item The class $\mathcal A_T$ is closed under direct limits;
\item $T$ is $\sum$-pure split. 
\end{enumerate}
\end{theorem}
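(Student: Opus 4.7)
The plan is to establish the cycle $(1) \Rightarrow (2) \Rightarrow (3) \Leftrightarrow (4) \Rightarrow (1)$, with the tree-module technology of Theorem~\ref{Sar_gen} supplying $(1) \Rightarrow (2)$, infinite-dimensional tilting theory from \cite{AH} supplying the central equivalence $(3) \Leftrightarrow (4)$, and the Hill lemma combined with Theorem~\ref{abundance_precov} supplying the closing implication $(3) \Rightarrow (1)$.

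For $(1) \Rightarrow (2)$, I argue by contraposition. Suppose some Bass module $B$ for $\mathcal{C}_T$ does not lie in $\mathcal{C}_T$. I first claim $B$ cannot be a direct summand in any $M \in \mathcal{L}_T$: if it were, then, $B$ being countably generated, it would be a direct summand of some member $N$ of the witnessing $\aleph_1$-dense system, and any such $N$ is countably $\mathcal{F}$-filtered, so $N \in \Filt{\mathcal{C}_T} = \mathcal{A}_T$. Countable presentability forces $N \in \mathcal{A}_T^{\leq\omega} = \mathcal{C}_T$; since $\mathcal{A}_T$ is closed under direct summands, $B \in \mathcal{A}_T$, and then $B \in \mathcal{C}_T$, contradicting our assumption. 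Theorem~\ref{Sar_gen} now yields that $B$ admits no $\mathcal{L}_T$-precover, so $(1)$ fails.

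The equivalence $(3) \Leftrightarrow (4)$ is the central result of the Angeleri--Herbera theory \cite{AH}: for a tilting module $T$, the left tilting class $\mathcal{A}_T$ is closed under direct limits if and only if $T$ is $\Sigma$-pure split, if and only if $\mathcal{A}_T = \varinjlim(\mathcal{A}_T \cap \rfmod R)$. For $(2) \Rightarrow (3)$, I bootstrap closure of $\mathcal{C}_T$ under countable direct limits up to closure of $\mathcal{A}_T$ under arbitrary direct limits via a cofinal-reduction argument: should $\mathcal{A}_T$ fail to be closed, passing to a countable cofinal subsystem in a counterexample yields a countably presented direct limit of modules in $\mathcal{C}_T$ lying outside $\mathcal{A}_T$, namely a Bass module for $\mathcal{C}_T$ escaping $\mathcal{C}_T$, contradicting $(2)$.

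For $(3) \Rightarrow (1)$, I would prove $\mathcal{L}_T = \mathcal{A}_T$. The inclusion $\mathcal{A}_T \subseteq \mathcal{L}_T$ is Lemma~\ref{closed_under}. Conversely, let $M \in \mathcal{L}_T$ with witnessing $\aleph_1$-dense system $\mathcal{S}$; each $N \in \mathcal{S}$ is countably $\mathcal{F}$-filtered and countably presented, hence in $\mathcal{C}_T$. A Hill-lemma style argument then provides an ascending continuous chain drawn from $\mathcal{S}$ with union $M$ and successive quotients in $\mathcal{C}_T$, exhibiting a $\mathcal{C}_T$-filtration of $M$ and placing $M$ in $\mathcal{A}_T$. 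By Theorem~\ref{chart}(ii), $\mathcal{A}_T$ is $\aleph_1$-deconstructible, and under $(3)$ it is also closed under direct limits, so Theorem~\ref{abundance_precov} delivers that $\mathcal{A}_T$ is covering---whence so is $\mathcal{L}_T$. The main obstacle I anticipate is the bootstrapping step $(2) \Rightarrow (3)$: promoting closure of $\mathcal{C}_T$ under countable direct limits to closure of the full class $\mathcal{A}_T$ under arbitrary direct limits. The viability of the sketched cofinal reduction rests on the countable presentability of Bass modules, the same feature that powers the tree-module constructions of Section~\ref{Ch1}. An alternative is to prove $(2) \Rightarrow (4)$ directly by extracting a non-split pure embedding into some $T^{(I)}$ from a hypothetical Bass module outside $\mathcal{C}_T$, invoking the tilting correspondence between $\mathrm{Add}(T)$ and pure-injective envelopes of $T$.
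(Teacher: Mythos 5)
The paper does not prove Theorem~\ref{general} directly: it states it, defers to \cite{AST}, and points out that it is deduced there as a corollary of Theorem~\ref{more_general}, whose proof ``employs relative Mittag-Leffler conditions'' via the characterization recorded in Theorem~\ref{rel_stat}. Your implications $(1)\Rightarrow(2)$, $(3)\Rightarrow(1)$ and the attribution of $(3)\Leftrightarrow(4)$ are all sound. In $(1)\Rightarrow(2)$ your reduction to a summand of some $N$ in the dense system is correct because the retraction $M\to B$ restricts to $N\supseteq B$, and then $N\in\mathcal C_T$ forces $B\in\mathcal A_T^{\leq\omega}=\mathcal C_T$, so Theorem~\ref{Sar_gen} applies. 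In $(3)\Rightarrow(1)$ the Hill-lemma detour is superfluous: once you observe $M=\bigcup\mathcal S$ with $\mathcal S\subseteq\mathcal C_T\subseteq\mathcal A_T$, closure of $\mathcal A_T$ under direct limits immediately gives $M\in\mathcal A_T$, so $\mathcal L_T=\mathcal A_T$ and Theorem~\ref{abundance_precov} finishes.

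The genuine gap is exactly where you anticipated it: the cofinal-reduction sketch for $(2)\Rightarrow(3)$ does not work as stated. A directed system indexed by a poset of uncountable cofinality admits no countable cofinal subsystem, so you cannot, in general, extract from a hypothetical counterexample to direct-limit closure a countable direct system of modules in $\mathcal C_T$ with direct limit outside $\mathcal A_T$; ``the counterexample is a Bass module'' is precisely what fails when the cofinality is $>\omega$. The actual route in \cite{AST} goes through the relative Mittag-Leffler / $\mathcal B$-stationarity machinery indicated in the paper's Theorem~\ref{rel_stat}: one shows that $(2)$ forces every module in $\varinjlim\mathcal C_T$ to be locally $\mathcal C_T$-free, identifies locally $\mathcal C_T$-free modules with the $\mathcal B_T$-stationary pure-epimorphic images of modules in $\mathcal A_T$, and uses the definability of $\mathcal B_T$ to conclude $\varinjlim\mathcal C_T\subseteq\mathcal A_T$, rather than a naive reduction of cofinality. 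Your alternative suggestion of proving $(2)\Rightarrow(4)$ directly via $\Sigma$-pure splitness is closer in spirit to what the literature does, but as presented it is only a heuristic and would need the same stationarity input to be made rigorous.
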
 

Here, a module $T$ is \emph{$\sum$-pure split} provided that each pure embedding $N \hookrightarrow M$ with $M \in \Add (T)$, splits. For example, any $\sum$-pure injective module is $\sum$-pure split. Also, a ring $R$ is right perfect, iff the regular module is $\sum$-pure split.

Since $\mathcal S _T = \mathcal A _T ^{< \omega} \subseteq \mathcal A _T \subseteq \mathcal L _T \subseteq \varinjlim \mathcal S _T$, condition (3) above is further equivalent to $\mathcal L _T$ being closed under direct limits. 

\medskip
Theorem \ref{general} indicates that the phenomenon of existence of non-precovering classes closed under transfinite extensions is much more widespread than originally expected. For example, though each finitely generated module over an artin algebra is $\sum$-pure injective (because it is endofinite), there do exist countably generated non-$\sum$-pure split tilting modules over each hereditary artin algebra of infinite representation type:

\begin{example}\label{Lukas} Let $R$ be an indecomposable hereditary finite dimensional algebra of infinite representation type. Recall that there is a partition of the represetative set of all indecomposable finitely generated modules, $\mbox{ind-}R$, into three sets:
$q$ - the indecomposable preinjective modules, $p$ - the indecomposable preprojective modules, and $t$ the indecomposable regular modules.

By Theorem \ref{chart}.2, $p^\perp$ is a right tilting class. The tilting module $T$ inducing $p^\perp$ is called the \emph{Lukas tilting module}. The left tilting class of $T$ is the class $\mathcal R$ of all \emph{Baer modules}. By \cite{AKT}, $\mathcal R = \Filt {p}$. The locally $T$-free modules are called \emph{locally Baer modules}. 

By \cite{AKT} the Lukas tilting module is countably generated, but it has no finitely generated direct summands, and it is not $\Sigma$-pure split. Therefore, by Theorem \ref{general}, the class of all locally Baer modules is not precovering (and hence not deconstructible).

Of course, this means that there exist Bass modules for $\mathcal R ^{< \omega}$ that are not Baer. Since $\mathcal R ^{< \omega} = \mbox{add}(p)$ is the class of all finitely generated preprojective modules, these Bass modules can be obtained as unions of the chains 
$$P_0 \overset{f_0}\hookrightarrow P_1 \overset{f_1}\hookrightarrow \dots \overset{f_{i-1}}\hookrightarrow P_i \overset{f_i}\hookrightarrow P_{i+1} \overset{f_{i+1}}\hookrightarrow \dots$$
such that all the $P_i$ are finitely generated preprojective, but the cokernels of all the $f_i$ are regular (i.e., in $\add (t)$). Such chains exist for any hereditary finite dimensional algebra of infinite representation type, see \cite{AST}.    
\end{example}

Theorem \ref{general} is proved in \cite{AST} as a corollary of a still more general result concerning cotorsion pairs:

\begin{theorem}\label{more_general} Let $\mathfrak C = (\mathcal A, \mathcal B)$ be a cotorsion pair such that the class $\mathcal B$ is closed under direct limits. Then $\mathcal A$ is $\aleph_1$-deconstructible (whence $\mathfrak C$ is complete), and there is a module $T$ such that $\mathcal A \cap \mathcal B = \Add T$. 

Let $\mathcal F = \mathcal A ^{\leq \omega}$ and $\mathcal L$ be the class of all locally $\mathcal F$-free modules. Then the following are equivalent:
\begin{enumerate}
\item $\mathcal L$ is a (pre-) covering class;
\item Each Bass module for $\mathcal F$ is contained in $\mathcal F$;
\item The class $\mathcal A$ is closed under direct limits;
\item $K$ is $\sum$-pure split. 
\end{enumerate}
\end{theorem}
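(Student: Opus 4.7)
The proof proceeds in two parts: first, the structural claim that $\mathcal A$ is $\aleph_1$-deconstructible (so $\mathfrak C$ is complete by Theorem~\ref{abundance_special}) and that there exists $T$ with $\mathcal A\cap\mathcal B=\Add T$; second, the equivalence of the four conditions. For the structural part, my plan is to exploit the fact that $\mathcal B$, being closed under direct limits and automatically under products (as $\mathcal B=\mathcal A^{\perp}$), has the hallmarks of a definable class, which in turn lets us apply the machinery (due to Šťovíček, extending Bazzoni--Šťovíček) that produces an $\aleph_1$-deconstructibility witness for $\mathcal A$. The existence of $T$ with $\Add T=\mathcal A\cap\mathcal B$ comes from the Enochs--Angeleri/Lenzing theory of kernels of such cotorsion pairs: a deconstructibility set together with closure of $\mathcal B$ under direct sums forces $\mathcal A\cap\mathcal B$ to have a single Add-generator (essentially a $\{0\}$-th Tilting-type Bongartz lemma).

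For the equivalences I would mimic the scheme of Theorem~\ref{general}. The easy implications are $(3)\Rightarrow(2)$ and $(3)\Rightarrow(1)$: a Bass module for $\mathcal F=\mathcal A^{\leq\omega}$ is a countable direct limit of modules from $\mathcal A$, hence in $\mathcal A$ by $(3)$, and being countably presented it lies in $\mathcal A^{\leq\omega}=\mathcal F$; moreover the inclusions $\mathcal A\subseteq\mathcal L\subseteq\varinjlim\mathcal A$ imply that $\mathcal L$ is closed under direct limits whenever $\mathcal A$ is, and then precoveringness (from Theorem~\ref{abundance_precov}(1) applied to the $\aleph_1$-deconstructibility already proved) upgrades to coveringness via Theorem~\ref{abundance_precov}(2). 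For $(3)\Leftrightarrow(4)$ I would use the standard argument that with $\mathcal A\cap\mathcal B=\Add T$, every $A\in\varinjlim\mathcal A$ fits into a pure exact sequence $0\to N\to M\to A\to 0$ with $M\in\Add T$, and that $\Sigma$-pure splitting of $T$ is exactly what is needed to split this sequence, putting $A\in\mathcal A$; the converse uses the fact that any pure embedding $N\hookrightarrow M$ with $M\in\Add T$ fits into a direct-limit presentation of a module in $\varinjlim\mathcal A$.

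The nontrivial direction is $(1)\Rightarrow(2)$ (and $(2)\Rightarrow(3)$), where the tree module construction is essential. If some Bass module $B$ for $\mathcal F$ fails to lie in $\mathcal F$, one has to show $B$ is not a direct summand in any locally $\mathcal F$-free module, because then Theorem~\ref{Sar_gen} immediately gives that $B$ admits no $\mathcal L$-precover, contradicting $(1)$. Concretely, if $B$ embedded as a summand in some $L\in\mathcal L$, then by witnessing $L$ through its $\aleph_1$-dense system one could cover $B$ by a countably generated member of the system, forcing $B$ itself to be a direct summand of a module in $\Filt{\mathcal F}^{\leq\omega}\subseteq\mathcal F$ — the details here rely on the fact that countably generated summands of locally $\mathcal F$-free modules lie in $\mathcal F$, an abstract analogue of "countably generated flat Mittag-Leffler modules are projective". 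For $(2)\Rightarrow(3)$ I would argue the contrapositive: if $\mathcal A$ is not closed under direct limits, pick $A=\varinjlim_{i<\omega} A_i$ with $A_i\in\mathcal A^{<\omega}$ but $A\notin\mathcal A$; by passing to a suitable countable cofinal chain inside $\mathcal F$ one exhibits a Bass module outside $\mathcal F$.

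The main obstacle is the structural half: producing $T$ and $\aleph_1$-deconstructibility from $\mathcal B$ being closed under direct limits alone — this is where the deep input from \cite{AST} enters, since unlike the tilting setting we have no a~priori bound $\mathcal A\subseteq\mathcal P_n$ to work with and must instead extract the generator and deconstructibility set purely from the limit-closure hypothesis. Once that is in hand, the equivalences $(1)$--$(4)$ are essentially a faithful transcription of the proof of Theorem~\ref{general}, with Theorem~\ref{Sar_gen} supplying the crucial tree module obstruction.
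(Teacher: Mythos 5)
The paper itself does not supply a proof of this theorem --- it is quoted from \cite{AST}, and the surrounding text only records that the proof uses Lemma~\ref{Sar_gen} as a key tool and \emph{employs relative Mittag-Leffler conditions} from \cite{AH}, \cite{H}, \cite{R}, with the $\mathcal B$-stationary characterization (Theorem~\ref{rel_stat}) appearing as a by-product. Judged against those hints, your proposal gets the central obstruction right and a nontrivial chunk of the argument correct, but it has a genuine gap in the directions the paper flags as depending on the Mittag-Leffler machinery.

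What is correct: your plan for $(1)\Rightarrow(2)$ is the intended one and works. Since a Bass module $B$ for $\mathcal F=\mathcal A^{\leq\omega}$ is countably presented, if $B$ were a direct summand of some $L\in\mathcal L$, then (writing $L=B\oplus B'$ and using the $\aleph_1$-dense system) any $N\in\mathcal S$ containing $B$ satisfies $N=B\oplus(N\cap B')$, so $B$ is a summand of $N\in\mathcal C\subseteq\mathcal F$, hence $B\in\mathcal A^{\leq\omega}=\mathcal F$ because $\mathcal A$ is closed under summands and summands of countably presented modules are countably presented. Contrapositively, $B\notin\mathcal F$ forces $B$ not to be a summand of any $L\in\mathcal L$, and Theorem~\ref{Sar_gen} then kills precoveringness. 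The easy implications $(3)\Rightarrow(2)$ and $(3)\Rightarrow(1)$ via $\mathcal A\subseteq\mathcal L\subseteq\varinjlim\mathcal A$ and Theorem~\ref{abundance_precov} are also fine. (Incidentally, you implicitly corrected the misprint in (iv): it should read ``$T$ is $\Sigma$-pure split.'')

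The gap is in $(2)\Rightarrow(3)$ and $(3)\Leftrightarrow(4)$, precisely where the paper says relative Mittag-Leffler conditions enter. For $(2)\Rightarrow(3)$ you wave at ``passing to a suitable countable cofinal chain,'' but if $\mathcal A$ is not closed under direct limits you only get \emph{some} direct system, not necessarily countable and not necessarily consisting of countably presented modules, with limit outside $\mathcal A$; reducing this to a countable chain inside $\mathcal F$ is nontrivial and in \cite{AST} goes through the $\mathcal B$-stationary characterization of local $\mathcal F$-freeness. For $(3)\Leftrightarrow(4)$, your claim that ``every $A\in\varinjlim\mathcal A$ fits into a pure exact sequence $0\to N\to M\to A\to 0$ with $M\in\Add T$'' is not justified and, as stated, I do not see how to prove it without already invoking the Mittag-Leffler/stationarity machinery; the special approximation sequences coming from completeness of $\mathfrak C$ give middle terms in $\mathcal A$ or in $\mathcal B$, not automatically in $\Add T$. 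Finally, the structural half (that closure of $\mathcal B$ under direct limits alone yields $\aleph_1$-deconstructibility of $\mathcal A$ and a generator $T$ of the kernel) is the deepest input from \cite{AST}; you are honest that you are citing it rather than proving it, but note that your heuristic ``$\mathcal B$ has the hallmarks of a definable class'' skips the nonobvious step of establishing closure under pure submodules, which is exactly part of what \cite{AST} proves.
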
 

The proof of Theorem \ref{more_general} employs relative Mittag-Leffler conditions studied in \cite{AH}, \cite{H} and \cite{R}. One of its interesting by-products is an alternative description of local $\mathcal F$-freeness using these conditions in the general setting of Theorem \ref{more_general}:  

\begin{definition} Let $\mathcal B$ be a class of modules. A module $M$ is \emph{$\mathcal B$-stationary} provided that $M$ can be expressed as the direct limit of a direct system $\mathcal M$ of finitely presented modules such that for each $B \in \mathcal B$, the induced inverse system $\mathcal H$ is Mittag-Leffler (see Lemma \ref{mitlef}). 
\end{definition}

\begin{theorem}\label{rel_stat} Let $M \in \rmod R$ and $(\mathcal A,\mathcal B)$ be a cotorsion pair in $\rmod R$ such that $\mathcal B$ is closed under direct limits. The the following are equivalent:
\begin{enumerate}
\item $M$ is locally $\mathcal A^{\leq\omega}$-free; 
\item $M$ is $\mathcal B$-stationary, and $M$ is a pure-epimorphic image of a module from $\mathcal A$.   
\end{enumerate}
In particular, if $M \in \varinjlim \mathcal A$, then $M$ is locally $\mathcal A^{\leq\omega}$-free, iff $M$ is $\mathcal B$-stationary. 
\end{theorem}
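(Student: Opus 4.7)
The plan is to prove the two implications separately. For $(1)\Rightarrow(2)$, let $\mathcal S$ be an $\aleph_1$-dense system of countably $\mathcal A^{\leq\omega}$-filtered submodules witnessing that $M$ is locally $\mathcal A^{\leq\omega}$-free. Since $\mathcal A$ is the left-hand class of a cotorsion pair, it is closed under transfinite extensions (Definition \ref{Salce}), so each $N\in\mathcal S$ lies in $\mathcal A$. As $M=\bigcup_{N\in\mathcal S} N$ is a directed union, the canonical map $\bigoplus_{N\in\mathcal S}N\to M$ is a pure epimorphism whose source lies in $\mathcal A$ (direct sums of members of $\mathcal A$ are transfinite extensions of them). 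For $\mathcal B$-stationarity, I would first establish the lemma that every countably presented $N\in\mathcal A$ is $\mathcal B$-stationary: writing $N=\varinjlim_{j<\omega} G_j$ with $G_j$ finitely presented, the vanishing $\Ext^1_R(N,B)=0$ for $B\in\mathcal B$ forces the inverse system $(\Hom R{G_j}{B})_{j<\omega}$ to be Mittag-Leffler via the standard $\varprojlim{}^1$/Raynaud--Gruson argument. Then choose a presentation $M=\varinjlim_{i\in I}F_i$ compatible with $\mathcal S$ in the sense that each structural map $F_i\to M$ factors through some $N_i\in\mathcal S$; this is possible because $F_i$ is finitely generated and $\mathcal S$ is $\aleph_1$-dense. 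The Mittag-Leffler property established inside each $N_i$ then propagates to a Mittag-Leffler condition on $(\Hom R{F_i}{B})_{i\in I}$.

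For $(2)\Rightarrow(1)$, the strategy is to produce, for every countable subset $X\subseteq M$, a countably presented $\mathcal A^{\leq\omega}$-filtered submodule of $M$ containing $X$, and to arrange these in a system closed under countable directed unions. Given a pure epimorphism $\pi:A\twoheadrightarrow M$ with $A\in\mathcal A$, I would lift $X$ to a countably generated pure submodule $A_0\subseteq A$ with $\pi(A_0)\supseteq X$; such a lift exists by standard purity arguments. Setting $N_0:=\pi(A_0)$, the key step is to invoke the converse lemma that a countably presented pure-epimorphic image of a module in $\mathcal A$ which is also $\mathcal B$-stationary already lies in $\mathcal A$, and in fact admits a countable $\mathcal A^{\leq\omega}$-filtration. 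This is exactly the content of the Herbera/Angeleri--Herbera type characterization from \cite{H} and \cite{AH}: the Mittag-Leffler stabilization together with the pure lift allows one to explicitly construct the filtration term-by-term as a direct union of finitely generated pieces with approximations in $\mathcal A^{\leq\omega}$.

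The main obstacle will be the bookkeeping required by condition (i) of Definition \ref{dense}: the stationarity and filterability must be preserved under countable directed unions of such submodules. One must verify that if $(N_k)_{k<\omega}$ is an ascending chain of countably presented $\mathcal A^{\leq\omega}$-filtered submodules of $M$, then their union is again such. This is delicate because arbitrary pure-epimorphic lifts need not be compatible across the chain, so one has to construct the lifts recursively using the $\mathcal B$-stationarity of $M$ to glue the partial filtrations. The final clause of the theorem is immediate: if $M\in\varinjlim\mathcal A$, then $M$ is automatically a pure-epimorphic image of a direct sum of modules from $\mathcal A$ (the canonical map from the direct sum of the system to the direct limit is a pure epimorphism), so the second half of condition (2) is redundant and only the $\mathcal B$-stationarity is needed.
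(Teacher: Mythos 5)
The paper does not actually prove Theorem~\ref{rel_stat}; being expository, it quotes the result from \cite{AST} and \cite{H} without argument, so there is no in-paper proof to compare against. Evaluating your proposal on its own terms:

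Your direction $(1)\Rightarrow(2)$ is broadly sound, with two small caveats. The $\varprojlim^1$ step does not give the Mittag-Leffler condition from $\Ext^1_R(N,B)=0$ for a \emph{single} $B\in\mathcal B$; one has to test against $B^{(X)}$ for arbitrary index sets $X$. This is available because $\mathcal B$, being closed under direct limits and products, is closed under direct sums, and because the $G_j$ are finitely presented one has $\Hom R{G_j}{B^{(X)}}\cong\Hom R{G_j}{B}^{(X)}$, so that vanishing of $\varprojlim^1$ on all powers $\Hom R{G_j}{B}^{(X)}$ yields ML. The ``propagation'' from the local systems inside the $N_i$'s to the full direct system presenting $M$ also needs an explicit sentence, but it does go through using the $\aleph_1$-density. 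Your treatment of the final clause is correct.

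The direction $(2)\Rightarrow(1)$ has a genuine gap, and it is not quite the bookkeeping obstacle you identify. You set $N_0:=\pi(A_0)$ for a countably generated pure submodule $A_0\leq A$ and then invoke a lemma whose hypotheses require $N_0$ to be countably presented, $\mathcal B$-stationary, and a pure-epimorphic image of a module in $\mathcal A$. None of these is established by your construction. First, $\pi\restriction A_0$ need not have countably generated kernel, so $N_0$ is merely countably generated, not a priori countably presented. Second, even though $\pi$ and $A_0\hookrightarrow A$ are both pure, the restriction $A_0\to\pi(A_0)$ need not be a pure epimorphism, and $\pi(A_0)$ need not be pure in $M$: lifting a map $F\to\pi(A_0)$ along $\pi$ only lands inside $A_0+\Ker\pi$, not inside $A_0$. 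Third, $\mathcal B$-stationarity of $M$ is a property of a presenting direct system of $M$ and does not automatically restrict to arbitrary countably generated submodules. The actual construction of the $\aleph_1$-dense system must run a simultaneous back-and-forth between a Hill family of $\mathcal A^{\leq\omega}$-filtered submodules of $A$ and the Mittag-Leffler stabilization data of the direct system presenting $M$, choosing $A_0$ at each stage so that its image is pure in $M$, the restricted map is a pure epimorphism, and the image inherits the stationarity. That coordination is the missing idea; once it is supplied, closure under countable ascending unions drops out of the same recursion rather than being a separate hurdle.
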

          
\begin{remark}\label{still_more_general} Lemma \ref{Sar_gen} is one of the key tools for proving Theorem \ref{more_general}. However, it can be used to construct non-precovering classes even outside the setting of \ref{more_general}: in \cite{ST2}, inspired by the ideas of Positselski from algebraic geometry \cite{P}, the authors studied very flat and locally very flat modules over commutative rings $R$. In analogy with our basic case of projective and flat Mittag-Leffler modules, they proved that for each noetherian domain, the class of all very flat modules is covering, iff the class of all locally very flat modules is precovering, iff the Zariski spectrum Spec(R) is finite.   
\end{remark}

\section{Tree modules and almost split sequences}\label{Ch2}

Almost split sequences play a central role in the representation theory of artin algebras. Since the founding work of Auslander and Reiten \cite{AR}, they provide a crucial tool for understanding the category of finitely generated modules, notably for studying extensions of indecomposable finitely generated modules. 

We will only briefly touch upon this vast topic. As in Section \ref{Ch1}, we will stick to the general setting of (possibly infinitely generated) modules over arbitrary rings. After introducing the necessary basic definitions in this setting, we will directly concentrate on a major problem formulated by Auslander in 1975 in \cite{A1}: Auslander asked whether the notion of a right almost split map in $\rmod R$ ending in a module $M$, available for any ring $R$ and any indecomposable finitely presented module $M$ with local endomorphism ring, does occur also for some infinitely generated modules. 
A negative solution has recently been obtained by \v Saroch in \cite{Sa1} and it uses (generalized) tree modules. Before explaining the solution in more detail (following \cite{Sa1}), we recall the relevant definitions:

\begin{definition}\label{almost}  Let $R$ be a ring. A morphism $f \in \Hom RBC$ is said to be \emph{right almost split} (in $\rmod R$) provided that the following two conditions are equivalent for each $B^\prime \in \rmod R$ and $h \in \Hom R{B^\prime}C$: 

\begin{enumerate}
\item $h$ factorizes through $f$;
\item $h$ is not a split epimorphism.
\end{enumerate}

\emph{Left almost split} morphisms are defined dually. A short exact sequence $0 \to A \overset{g}\to B \overset{f}\to C \to 0$ in $\rmod R$ is an \emph{almost split sequence} provided that $f$ is right almost split and $g$ is left almost split.  
\end{definition}

If $f$ is right almost split, then it is easy to see that $f$ is not a split epimorphism, and the endomorphism ring of $C$ is local (whence $C$ is indecomposable). Auslander \cite{A2} proved that the converse holds true when $C$ is finitely presented:

\begin{theorem}\label{ausl} Let $C$ be a finitely presented module. Then there exists a right almost split morphism $f \in \Hom RBC$ in $\rmod R$, iff the endomorphism ring of $C$ is local. If this is the case and $C$ is not projective, then there exists an almost split sequence  $0 \to A \to B \to C \to 0$ in $\rmod R$.
\end{theorem}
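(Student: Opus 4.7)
My plan is as follows. The ``only if'' direction is short and direct. Taking $h = 1_C$ in the defining equivalence immediately shows $f$ is not a split epimorphism. Next, any $u \in \End C$ that is a split epimorphism must be an isomorphism: picking $s$ with $us = 1_C$, the idempotent $e := su$ either equals $1_C$ (in which case $u$ and $s$ are mutual inverses) or is nontrivial, and in the latter case both $e$ and $1_C - e$ are non-split-epi and so factor through $f$, whence $1_C = e + (1_C-e)$ would itself factor through $f$, contradicting the previous sentence. A routine variant of this sum-trick shows that the non-split-epi endomorphisms of $C$ are closed under addition and under two-sided multiplication by $\End C$, so they form a two-sided ideal whose complement consists of units; thus $\End C$ is local.

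For the ``if'' direction I would follow Auslander's functor-category approach. Let $\mathcal{A}$ denote the abelian category of coherent (finitely presented) additive functors $(\rfmod R)^{\mathrm{op}} \to \mathbf{Ab}$; by Yoneda the projectives in $\mathcal{A}$ are precisely the representables $\Hom R{-}{X}$ with $X \in \rfmod R$, and the assignment $X \mapsto \Hom R{-}{X}$ is fully faithful. Setting $F := \Hom R{-}{C}$, one has $\End F \cong \End C$, which is local by assumption, so $F$ is an indecomposable finitely generated projective in $\mathcal{A}$ and carries a unique maximal subfunctor $\rad{F}$, given pointwise by $\rad{F}(X) = \{h \in \Hom R{X}{C} : h \text{ is not a split epimorphism}\}$. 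The quotient $S_C := F/\rad{F}$ is simple, and choosing a projective presentation $\Hom R{-}{B} \xrightarrow{f_*} F \to S_C \to 0$ in $\mathcal{A}$ yields, via Yoneda, a morphism $f \colon B \to C$ in $\rfmod R$ whose image under $f_*$ is exactly $\rad{F}$; equivalently, $f$ is right almost split in $\rfmod R$.

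The main obstacle is promoting right-almost-splitness from $\rfmod R$ to $\rmod R$. Given $h \colon M \to C$ that is not a split epimorphism, I would write $M = \varinjlim M_i$ as a directed colimit of finitely presented modules with structure maps $\iota_i \colon M_i \to M$. Using that $\Hom R{-}{C}$ turns colimits into limits, $h$ corresponds to the compatible family $h_i := h \iota_i$, and none of the $h_i$ can be a split epimorphism, for a section of some $h_{i_0}$ would compose with $\iota_{i_0}$ to yield a section of $h$. By the previous paragraph each fibre $T_i := (f_*)^{-1}(h_i) \subseteq \Hom R{M_i}{B}$ is nonempty, and the $T_i$ form an inverse system of cosets over $\Hom R{M_i}{\ker f}$; the hard step is to exhibit a point of $\varprojlim T_i$, i.e.\ a coherent lift $g \colon M \to B$ with $fg = h$. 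This is where the finer structure of $A := \ker f$ enters: in Auslander's construction $A$ can be chosen finitely presented with local endomorphism ring (it is the Auslander--Reiten translate of $C$ when $C$ is non-projective), and this forces a Mittag-Leffler property on the relevant inverse system of Homs, making $\varprojlim T_i$ nonempty.

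For the final assertion, assume $C$ is non-projective. Then the morphism $f \colon B \to C$ produced above is surjective, so $0 \to A \to B \xrightarrow{f} C \to 0$ is short exact. Dualising the functor-category construction to the abelian category of coherent covariant functors on $\rfmod R$ and applying it to $\Hom R{A}{-}$ yields a left almost split morphism out of $A$, which by the standard uniqueness argument for almost split sequences must coincide with the inclusion $A \hookrightarrow B$; this completes the almost split sequence.
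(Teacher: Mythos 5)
The paper does not prove this theorem; it cites Auslander's survey \cite{A2} and records the statement as background. So your proposal cannot be compared with a proof in the paper, only assessed on its own. Your ``only if'' direction is correct (the idempotent trick and the sum-trick do make the non-split-epi endomorphisms a two-sided ideal, though closure under left multiplication by an arbitrary $w\in\End (C)$ deserves the extra case split $w$ unit / $w$ non-unit). The functor-category construction of a right almost split map $f:B\to C$ inside $\rfmod R$ is also correct and is indeed Auslander's approach.

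However, the step that promotes right-almost-splitness from $\rfmod R$ to $\rmod R$ --- which is the whole content of the theorem --- rests on a false premise. You assert that ``$A$ can be chosen finitely presented with local endomorphism ring (it is the Auslander--Reiten translate of $C$)''. That is true over an artin algebra, where $A\cong D\mathrm{Tr}(C)$ for the standard duality $D$, but it is false over a general ring $R$: no such duality exists and $A=\ker f$ is typically not finitely presented at all. The property that actually drives the extension is that $A$ is \emph{pure-injective} (this is even recalled in the remark closing Section~2 of the paper). Once $A$ is pure-injective no Mittag--Leffler bookkeeping is needed: writing $M=\varinjlim M_i$ with $M_i$ finitely presented gives a pure-exact sequence $0\to K\to\bigoplus_i M_i\to M\to 0$, and applying $\Hom R{-}{A}$ shows $\Hom R{\bigoplus_i M_i}A\to\Hom RKA$ is onto, hence the connecting map vanishes and $\Ext 1RMA\to\prod_i\Ext 1R{M_i}A$ is injective. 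The class $h^*(\xi)\in\Ext 1RMA$ of the pullback of $\xi:0\to A\to B\to C\to 0$ along a non-split-epi $h$ maps to $(h_i^*(\xi))_i$, each component of which is $0$ because $h_i$ factors through $f$ in $\rfmod R$; so $h^*(\xi)=0$ and $h$ factors through $f$. Your $\varprojlim T_i$ argument, by contrast, has no supplied reason to produce a coherent lift, and as stated depends on the nonexistent finitely presented $A$. Similarly, the final paragraph on the left almost split map cannot proceed by ``dualizing the functor-category construction'' applied to $\Hom R{A}{-}$, since $A$ is not finitely presented; one instead passes to a right minimal version of $f$ (available because $A$ is pure-injective and $\End (C)$ is local) and verifies directly that the kernel inclusion is left almost split.
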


\begin{remark} In the particular case when $R$ is an artin algebra, much more can be proved: the almost split sequences are unique and form the (simple) socles of the extension modules. More precisely, if $C \in \rfmod R$ is indecomposable and non-projective, then there exists a unique (up to equivalence of short exact sequences) almost split sequence  $0 \to A \to B \to C \to 0$, and  $A \in \rfmod R$ is indecomposable non-injective. The equivalence class of this almost split sequence forms the socle of the left $S$-module $\Ext 1RCA$ (where $S$ is the endomorphism ring of $C$), and this socle is a simple left $S$-module. Moreover, also the dual statement holds true --- see e.g.\ \cite[V.2]{ARS} for more details.    
\end{remark}  

In \cite{A1}, Auslander asked whether given a ring $R$ and a module $C$, there exists a right (left) almost split morphism in $\rmod R$ ending (beginning) in $C$. The main result of \cite{Sa1} answers this question for right almost split morphisms:

\begin{theorem}\label{saroch} Let $R$ be a ring and $C \in \rmod R$. Then there exists a right almost split map $f \in \Hom RBC$ in $\rmod R$, iff $C$ is finitely presented and the endomorphism ring of $C$ is local.
\end{theorem}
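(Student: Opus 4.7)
The ``if'' direction is immediate from Auslander's Theorem \ref{ausl}. For the ``only if'' direction, suppose $f \in \Hom{R}{B}{C}$ is right almost split. That the endomorphism ring of $C$ is local (whence $C$ is indecomposable and $f$ is not a split epimorphism) is already recorded after Definition \ref{almost}, so the essential content is to prove $C$ is finitely presented. My plan is a proof by contradiction, adapting the tree-module strategy of Theorem \ref{Sar_fml} to the weaker hypothesis of right almost splitness.

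Assume $C$ is not finitely presented. Since $C$ is indecomposable, $C$ is not a summand of any finitely presented module, so a cofinality argument lets me write $C = \varinjlim_{i < \omega} C_i$ for a countable direct system $\mathfrak D = (C_i, f_{ji})$ of finitely presented modules with no canonical map $C_i \to C$ a split epimorphism: in the language of Notation \ref{notat_bas}, $C$ itself is a Bass module for $\mathcal F = \{C_i \mid i < \omega\}$. Choose an infinite cardinal $\kappa$ with $|R| + |\Ker{f}| \leq \kappa$ and $\kappa < \kappa^\omega = 2^\kappa$ (Remark \ref{rem_card_arit}), and build the tree module $L$ attached to $\mathfrak D$ and $\kappa$ via Notation \ref{notat_tree}. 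Lemma \ref{tree_module_L} yields
$$0 \to D \to L \to C^{(2^\kappa)} \to 0, \qquad |D| \leq \kappa.$$
For each branch $\nu \in \mathrm{Br}(T_\kappa)$, let $h_\nu : L \to C$ denote the composite of $L \twoheadrightarrow L/D \cong C^{(2^\kappa)}$ with the $\nu$th coordinate projection.

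The argument now has two linked ingredients. The first is a cardinality/Ext argument in the pattern of Theorem \ref{Sar_fml}: form a special $\{L\}^\perp$-preenvelope $\eta: \Ker{f} \hookrightarrow E$ with $\{L\}$-filtered cokernel (Theorem \ref{abundance_special}), push out $0 \to \Ker{f} \to B \to C \to 0$ along $\eta$ to obtain $0 \to E \to P \to C \to 0$, and apply $\Hom{R}{-}{E}$ to the tree-module sequence. Using $\Ext{1}{R}{L}{E} = 0$ one extracts a surjection $\Hom{R}{D}{E} \twoheadrightarrow \Ext{1}{R}{C^{(2^\kappa)}}{E} \cong \Ext{1}{R}{C}{E}^{\,2^\kappa}$ whose source has cardinality at most $2^\kappa$, while the target has cardinality at least $2^{2^\kappa}$ provided $\Ext{1}{R}{C}{E}$ is non-zero, i.e.\ provided the pushed-out extension remains non-split. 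The second ingredient is to identify a non-split-epi morphism with target $C$ (at the level of the pushout, or of a suitable $h_\nu$) so that right almost splitness applies, yielding a factorization through $f$ and hence, via the pushout, a map $L \to B$ that forces the pigeonhole violation of the first ingredient.

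The principal obstacle is the interface between the two ingredients. In the precovering setting of Theorem \ref{Sar_fml}, the relevant map $P \to C$ factors through $f$ automatically, whereas right almost splitness supplies the factorization only for morphisms that are known not to be split epimorphisms. Verifying this for $h_\nu$ (or for $P \to C$) amounts to excluding that $C$ appears as a direct summand of $L$ compatibly with the tree structure --- a task that ought to be feasible by exploiting the local $\mathcal F$-freeness of $L$ from Lemma \ref{L} together with the indecomposability of $C$ and its failure to be a summand of any countably $\mathcal F$-filtered module, but that has no counterpart in the proof of Theorem \ref{Sar_fml}. Handling this obstruction uniformly is precisely the novelty of \v Saroch's refinement of the tree-module construction in \cite{Sa1}, and it is what I expect to be the technical heart of the proof.
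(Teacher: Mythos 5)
Your plan handles the ``if'' direction correctly and correctly locates the difficulty in the ``only if'' direction, but there are two genuine gaps, and the first is fatal to the approach as written. You claim that any non-finitely-presented indecomposable $C$ can be written as $\varinjlim_{i<\omega} C_i$ for a countable direct system of finitely presented modules, i.e.\ that $C$ is a Bass module in the sense of Notation~\ref{notat_bas}. This holds only when $C$ is countably presented; the theorem allows arbitrary $C$, which may be $\theta$-presented but not $<\theta$-presented for an uncountable $\theta$ with $\cf{\theta}>\omega$, and indecomposability does not force countable presentation. This is precisely why the paper does not reuse the $T_\kappa$ trees of Section~\ref{Ch1} (whose branches have length $\omega$) but instead builds, via Lemma~\ref{sar_lem}, a generalized tree of height $\mu = \cf{\theta}$ decorated by a well-ordered continuous direct system of $<\theta$-presented modules, with Lemmas~\ref{key}--\ref{L-test} supplying the analogues of the Section~\ref{Ch1} machinery. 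Your construction only reaches the case $\cf{\theta} = \omega$.

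The second gap is the order in which you try to link your ``two ingredients.'' You propose to first verify that the relevant map into $C$ is not a split epimorphism (the step you admit you cannot carry out), then invoke right almost splitness to get a factorization through $f$, and then run the cardinality pigeonhole. The paper reverses this and thereby dissolves the obstacle: Lemma~\ref{L-test} \emph{is} the cardinality/Ext argument, and it is used to show directly that $hg$ does \emph{not} factorize through $f$. Since for a right almost split $f$ the two conditions in Definition~\ref{almost} are equivalent, failure of factorization is equivalent to $hg$ \emph{being} a split epimorphism; so instead of needing to rule out the split-epi alternative in advance, one \emph{deduces} it. The contradiction then comes from a separate structural result, Lemma~\ref{splite}: the tree module $L$ is finitely $\theta$-separable, and any direct summand $\hbox{Im}(e)$ with local endomorphism ring and $e\restriction H = 0$ must be $<\theta$-presented. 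Applying this to $e = \iota h g$ forces $C \cong \hbox{Im}(e)$ to be $<\theta$-presented, contradicting the choice of $\theta$. This separability lemma --- the structural reason a split epimorphism $L \to C$ cannot exist for a large $C$ --- is the genuinely new ingredient your proposal lacks, and without it the argument does not close.
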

  
\medskip
Now, we proceed to indicate the modifications of the construction of tree modules $T_\kappa$ from Section \ref{Ch1} that are needed for the proof of Theorem \ref{saroch}. 

First, instead of the Bass modules $B$, which are direct limits of the direct systems $\mathcal D = ( F_i, f_{ji} \mid i \leq j < \omega )$ of small (= countably presented) modules $F_i$ indexed in $\omega$, we will need to consider direct limits of general well-ordered continuous direct systems $\mathcal E = ( F_\alpha, f_{\beta \alpha} \mid \alpha \leq \beta < \cf{\theta}  )$ of small (= $< \theta$-presented) modules $F_\alpha$ indexed by the regular infinite cardinal $\cf{\theta}$. Here, $\cf{\theta}$ denotes the cofinality of $\theta$, and the term \emph{continuous} means that for each limit ordinal $\alpha < \cf{\theta}$, we have $F_\alpha = \varinjlim_{\beta < \alpha} F_\beta$. 

These general direct systems are important because of the following classic result by Iwamura:

\begin{lemma}\label{sar_lem} Let $R$ be a ring, $\theta$ be an infinite cardinal, and $C$ a $\theta$-presented module. Then $C$ is the direct limit of a well-ordered continuous direct systems $\mathcal E = ( F_\alpha, f_{\beta \alpha} \mid \alpha \leq \beta < \cf{\theta} )$ such that $F_\alpha$ is a $< \theta$-presented module for each $\alpha < \cf{\theta}$.
\end{lemma}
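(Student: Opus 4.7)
The plan is to reduce the claim to a purely set-theoretic fact about directed sets: every directed set $(J,\leq)$ of infinite cardinality $\theta$ can be written as a continuous well-ordered union $J = \bigcup_{\alpha < \cf{\theta}} J_\alpha$ of directed subsets with $\card {J_\alpha} < \theta$ for each $\alpha < \cf{\theta}$. Once this Iwamura-type lemma on directed sets is in hand, I would apply it to the index set of a presentation of $C$ as a direct limit of finitely presented modules and take the induced filtration by $<\theta$-presented submodules.

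To prove the set-theoretic lemma I would split on the nature of $\theta$. If $\theta = \aleph_0$ the statement is essentially trivial, since $\cf{\theta} = \omega$ and the original countable direct system can be used directly. If $\theta$ is a regular uncountable cardinal, I would enumerate $J = \{i_\beta \mid \beta < \theta\}$ and define $(J_\alpha)_{\alpha < \theta}$ by transfinite recursion, setting $J_0 = \emptyset$, $J_\alpha = \bigcup_{\beta < \alpha} J_\beta$ at limits, and forming $J_{\alpha+1}$ at successor steps by adjoining $i_\alpha$ to $J_\alpha$ and then iteratively closing $\omega$ many times under upper bounds of finite subsets of the current set (such bounds exist by directedness of $J$). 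Regularity and uncountability of $\theta$ are precisely what ensure $\card {J_\alpha} < \theta$ is preserved along the recursion, since each closure step multiplies cardinality by at most $\aleph_0$. If $\theta$ is singular, I would fix a strictly increasing sequence of infinite cardinals $(\theta_\alpha)_{\alpha < \cf{\theta}}$ cofinal in $\theta$, enumerate $J$ by $\theta$, and form $J_\alpha$ by taking the first $\theta_\alpha$ elements of $J$ and closing under finite upper bounds as above; the construction produces directed sets of cardinality $\theta_\alpha < \theta$.

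With the set-theoretic decomposition at hand, write $C = \varinjlim_{i \in J} G_i$ with $G_i$ finitely presented and $\card J \leq \theta$; if $\card J < \theta$ the conclusion is immediate (take a constant system), so assume $\card J = \theta$. Setting $F_\alpha = \varinjlim_{i \in J_\alpha} G_i$ yields $<\theta$-presented modules, and the inclusions $J_\alpha \subseteq J_\beta$ induce structure maps $f_{\beta \alpha} : F_\alpha \to F_\beta$. Continuity at a limit $\alpha$ follows by interchanging direct limits using $J_\alpha = \bigcup_{\beta < \alpha} J_\beta$, and $C = \varinjlim_{\alpha < \cf{\theta}} F_\alpha$ follows in the same way from $J = \bigcup_\alpha J_\alpha$.

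The main technical obstacle is the regular uncountable case of the set-theoretic lemma, where one must verify that iteratively closing under finite upper bounds preserves cardinality strictly below $\theta$; this is precisely where regularity enters, and it also explains why the conclusion is formulated in terms of chains of length $\cf{\theta}$ rather than $\theta$ itself, since for singular $\theta$ one cannot in general expect an ascending chain of length $\theta$ of $<\theta$-presented modules.
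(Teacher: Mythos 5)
The paper does not prove this lemma at all; it states it as ``a classic result by Iwamura'' and moves on, so there is no paper argument to compare against. Your proposal is a correct and essentially standard proof of that classical fact, and the reduction to a purely combinatorial statement about directed posets is exactly the right move. Two small points you should tighten. First, when you ``iteratively close under upper bounds of finite subsets,'' fix a single choice function that assigns to each finite subset of $J$ one upper bound, and always close under that function; this makes the closure operator deterministic and finitary, which is what you need in the next step. Second, in the singular case with $\cf{\theta}$ uncountable you cannot simply declare $J_\alpha$ to be ``the closure of the first $\theta_\alpha$ elements'' and hope that continuity at limits comes for free: you must either (a) define $J_\gamma := \bigcup_{\alpha<\gamma} J_\alpha$ at limit stages $\gamma$ and only use the $\theta_\alpha$-prescription at successors, or (b) choose the cofinal sequence $(\theta_\alpha)_{\alpha<\cf{\theta}}$ to be \emph{continuous} (i.e.\ $\theta_\gamma = \sup_{\alpha<\gamma}\theta_\alpha$ at limits), in which case continuity of $(J_\alpha)$ follows from the finitary nature of the fixed closure operator, since the closure of a directed union is the union of the closures. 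Either repair is one line. Beyond that, passing from the directed-set decomposition to the module statement by setting $F_\alpha = \varinjlim_{i\in J_\alpha} G_i$, using functoriality of $\varinjlim$ for the transition maps, and commuting direct limits for continuity and for $C = \varinjlim_\alpha F_\alpha$, is exactly as you describe; the resulting $F_\alpha$ are $<\theta$-presented in the sense the paper defines (a direct limit of a direct system of $<\theta$ finitely presented modules), which is all that is required. One cosmetic slip: you speak once of a ``filtration by $<\theta$-presented submodules,'' but the maps $F_\alpha\to C$ need not be injective; this does not affect the argument since the lemma only asks for a direct system, not a chain of submodules.
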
         
 
The second non-trivial modification concerns the trees $T_\kappa$. The modified trees will again be uniformly decorated (by the well-ordered continuous direct system $\mathcal E$). But we now have two initial parameters: $\mu$ and $\kappa$, where $\mu = \cf{\theta}$ is a regular infinite cardinal, and $\kappa$ is an infinite cardinal such that $\kappa ^{<\mu} = \kappa$. 

The latter equality is used in \cite{Sa1} to define the modified tree $T$ so that the set of all its branches $\hbox{Br}(T)$ is a certain subset of $\kappa ^\mu$ of cardinality $\kappa ^\mu$ such that any two distinct elements of $\hbox{Br}(T)$ coincide on an initial segment of $\mu$. The partial order on $T \subseteq \mu \times \kappa$ is defined by $(\alpha,\beta) \leq (\gamma,\delta)$, if $\alpha < \gamma$ and there exists $\eta \in \hbox{Br}(T)$ such that $\eta(\alpha) = \beta$ and $\eta(\gamma) = \delta$. 

Such construction of the tree $T$ has the advantage that it still enables a presentation of the resulting tree module as a submodule of a product of copies of the modules $F_\alpha$ ($\alpha < \mu$). In order to explain the latter fact, we recall the following lemma relating well-ordered direct limits to (reduced) products (see e.g.\ \cite[3.3.2]{Pr}):

\begin{lemma}\label{prest} Let $\mu$ be a regular infinite cardinal and $\mathcal E = ( F_\alpha, f_{\beta \alpha} \mid \alpha \leq \beta < \mu )$ be a well-ordered continuous direct system of modules. Then there is the following commutative diagram  
$$\begin{CD}
0@>>>  K@>{\subseteq}>> D@>{\pi}>> E@>>>  0\\
@.     @VVV    @V{\tau_0}VV @V{\sigma_0}VV @.\\
0@>>>  Q@>{\subseteq}>> P@>>>   {P/Q}@>>>   0
\end{CD}$$
where $D = {\bigoplus_{\alpha < \mu} F_\alpha}$, $E = \varinjlim_{\alpha < \mu} F_\alpha$, $\pi : D \to E$ is the canonical pure epimorphism, $K = \Ker {\pi}$, $P = {\prod_{\alpha < \mu} F_\alpha}$, $Q$ is the submodule of $P$ consisting of the sequences with support of cardinality $< \mu$, and $\tau_0$ is defined as follows: for each $\alpha < \mu$ and $x \in F_\alpha \subseteq D$, the $\beta$th component of the sequence $\tau_0(x)$ equals $f_{\beta \alpha}(x)$ in case $\alpha \leq \beta < \mu$, and it is $0$ otherwise. Moreover, the induced map $\sigma _0$ is a pure monomorphism.   
\end{lemma}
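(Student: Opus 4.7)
The plan is to produce the diagram by defining $\tau_0$ coordinate-wise, verify that it sends the kernel $K$ of $\pi$ into the bounded-support submodule $Q$ so that it descends to a map $\sigma_0$, and then show that $\sigma_0$ is a pure monomorphism. The upper row is the standard presentation of a filtered colimit and is automatically pure-exact, while the lower row is exact by the definition of $Q$. The map $\tau_0$ is $R$-linear because on each summand $F_\alpha \subseteq D$ it is given by the family $(f_{\beta\alpha})_{\alpha \le \beta < \mu}$ of $R$-homomorphisms, extended by $0$; commutativity of the right-hand square will be automatic once $\sigma_0$ is defined as the descent of $\tau_0$ through $\pi$, and the left vertical map is then the obvious restriction.

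To show $\tau_0(K) \subseteq Q$, observe that $K$ is generated as an $R$-module by elements of the form $x - f_{\beta\alpha}(x)$ with $\alpha \le \beta < \mu$ and $x \in F_\alpha$. A direct computation using $f_{\gamma\beta} \circ f_{\beta\alpha} = f_{\gamma\alpha}$ for $\alpha \le \beta \le \gamma$ shows that the $\gamma$th component of $\tau_0(x - f_{\beta\alpha}(x))$ equals $f_{\gamma\alpha}(x)$ for $\alpha \le \gamma < \beta$ and vanishes for $\gamma < \alpha$ or $\gamma \ge \beta$. Hence its support is contained in the interval $[\alpha,\beta)$, of cardinality $|\beta| < \mu$; any $R$-linear combination of finitely many such generators has support in a finite union of such intervals, still of cardinality $< \mu$, and therefore lies in $Q$. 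This yields the induced map $\sigma_0 : E \to P/Q$ and the commutative diagram. Injectivity of $\sigma_0$ will drop out of the purity argument below, but it is also easy directly: given $d \in D$ with $\tau_0(d) \in Q$, modify $d$ by an element of $K$ to assume $d \in F_\alpha$ for some $\alpha < \mu$; then $\tau_0(d) \in Q$ forces $\{\gamma \ge \alpha : f_{\gamma\alpha}(d) \ne 0\}$ to have cardinality $< \mu$, hence to be bounded by regularity of $\mu$, so $f_{\gamma\alpha}(d) = 0$ for some $\gamma \ge \alpha$ and $d \in K$.

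The main work, and the expected obstacle, is showing that $\sigma_0$ is pure. I would argue via the pp-criterion: $\sigma_0$ is pure iff every finite system of $R$-linear equations with parameters in $E$ that is solvable in $P/Q$ is already solvable in $E$. Given such a system witnessed in $P/Q$ by $b_1,\dots,b_m$, lift each $b_j$ to some $\tilde b_j \in P$ and, as in the injectivity argument, lift all parameters to a common $F_\alpha$. For each equation the discrepancy between its two sides, computed in $P$ from the $\tilde b_j$ and $\tau_0$ of the lifted parameters, lies in $Q$ and hence has support of cardinality $< \mu$. The union of these finitely many supports still has cardinality $< \mu$ and, by regularity of $\mu$, is bounded by some $\gamma < \mu$ with $\gamma \ge \alpha$. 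Evaluating at the $\gamma$th coordinate produces witnesses $c_j := (\tilde b_j)_\gamma \in F_\gamma$ satisfying the corresponding equations in $F_\gamma$; projecting along the structural map $F_\gamma \to E$ yields the desired solution in $E$. Existentially quantified auxiliary variables in a general pp-formula are absorbed by the same cofinality-and-projection argument. The only essential difficulty is thus the set-theoretic observation already isolated, namely that in a regular $\mu$ any subset of cardinality $< \mu$ (and any finite union of such) stays bounded. Continuity of the direct system enters only implicitly, to allow replacement of intermediate indices by their suprema without leaving the system.
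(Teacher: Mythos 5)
The paper does not prove this lemma; it is recalled from Prest \cite[3.3.2]{Pr} without argument, so there is no in-paper proof to compare against. Judged on its own terms, your argument is correct and is essentially the standard one: compute $\tau_0$ on the generators $x - f_{\beta\alpha}(x)$ of $K$ to see that $\tau_0(K)\subseteq Q$ (support contained in $[\alpha,\beta)$), deduce the induced map $\sigma_0$ and the commutativity of the diagram, establish injectivity by reducing to a single $F_\alpha$ and using regularity of $\mu$ to bound the nonvanishing coordinates, and for purity bound the finitely many $<\mu$-sized discrepancy supports by a single $\gamma<\mu$ via regularity, evaluate the lifted linear system at the $\gamma$th coordinate, and push forward along $F_\gamma\to E$. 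Two small remarks. First, your closing sentence overstates the role of continuity: nothing in your proof uses that $F_\gamma=\varinjlim_{\beta<\gamma}F_\beta$ at limit $\gamma$; you only need the evaluation index $\gamma$ to exist and to dominate $\alpha$ together with the support bound, and regularity of $\mu$ alone gives that. The lemma is in fact true for an arbitrary well-ordered direct system over a regular $\mu$; the continuity hypothesis is carried along because it is needed in the applications of Section 2, not for this lemma. Second, the appeal to general pp-formulas with auxiliary existential variables is unnecessary: the purely equational criterion for purity (every finite $R$-linear system with constants in $E$ solvable in $P/Q$ is solvable in $E$), which is what you actually verify, already characterizes pure monomorphisms.
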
   

\begin{remark} While the first row of the diagram in Lemma \ref{prest} is the usual pure-exact sequence presenting a direct limit as the quotient of a direct sum, the monomorphism $\tau_0 : D \to P$ is not the usual pure embedding of the direct sum into the direct product. In fact, in the particular case of $\mu = \omega$, $\tau_0 \restriction F_i$ maps $F_i$ into $P$ the same way as the assignment $x \to x_{\nu i}$ in Notation \ref{notat_tree}.
\end{remark}   

Now, we will uniformly decorate the tree $T$ by the direct system $\mathcal E$. First we use the tree $T$ to have a multidimensional version of the commutative diagram from Lemma \ref{prest} as follows:
$$\begin{CD}
0@>>>  K^{(\hbox{Br}(T))}@>{\subseteq}>> D^{(\hbox{Br}(T))}@>>> E^{(\hbox{Br}(T))}@>>>  0\\
@.     @VVV    @V{\tau}VV @V{\sigma}VV @.\\
0@>>>  {Q^\prime}@>>> P^\prime@>{\rho}>>   {P^\prime/Q^\prime}@>>>   0
\end{CD}$$
where $P^\prime = \prod_{(\alpha,\beta) \in T} F_{\alpha \beta}$, $Q^\prime$ is the submodule of $P^\prime$ consisting of the sequences with support of cardinality $< \mu$, and $F_{\alpha \beta} = F_\alpha$ for all $(\alpha,\beta) \in T$. 

Here, the upper exact sequence is just the direct sum of $\kappa ^\mu$ copies of the canonical presentation of $E$ as a pure-epimorphic image of $D$ (i.e., of the first row of the diagram in Lemma \ref{prest}), and $\tau$ restricted to the $\eta$th component (for $\eta \in \hbox{Br}(T)$) acts as $\tau_0$ from Lemma \ref{prest} (again, compare with Notation \ref{notat_tree}).       

Finally, we can define the generalized tree module:

\begin{lemma}\label{key} In the setting above, let $L = \hbox{Im}(\tau)$ and $H = \Ker {\rho \restriction L}$. Then we have the tree module exact sequence
$$0 \to H \to L \overset{g}\to C^{(\hbox{Br}(T))} \to 0$$ 
where $C = \varinjlim F_\alpha$ is the direct limit of the direct system $\mathcal E$.    
\end{lemma}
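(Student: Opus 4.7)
The plan is to exhibit $g$ as the map induced on $L$ by the right-hand square of the displayed diagram. I would break the argument into three steps.

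First, I would verify that the diagram commutes, i.e.\ that $\sigma$ is well-defined by $\sigma \circ \pi^{(\hbox{Br}(T))} = \rho \circ \tau$, where $\pi^{(\hbox{Br}(T))}$ denotes the direct sum of $\kappa^\mu$ copies of $\pi$ from Lemma \ref{prest}. For this it suffices to check $\tau(K^{(\hbox{Br}(T))}) \subseteq Q'$: the restriction of $\tau$ to the $\eta$-th summand $D$ is precisely $\tau_0$ from Lemma \ref{prest} composed with the obvious inclusion along the branch $\eta$, and $\tau_0(K) \subseteq Q$ by that lemma; regularity of $\mu$ then guarantees that a finite union of $< \mu$-sized supports still has cardinality $< \mu$.

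Second, I would prove that $\sigma$ is a monomorphism. Given $d = \sum_{i=1}^n d_{\eta_i}$ supported on pairwise distinct branches $\eta_i$, with $\tau(d) \in Q'$, the hypothesis on $T$ (that any two distinct branches agree exactly on an initial segment of $\mu$), combined with regularity of $\mu$, yields some $\gamma < \mu$ past which the values $\eta_i(\alpha)$ are pairwise distinct. For $\alpha \geq \gamma$, the $(\alpha, \eta_i(\alpha))$-coordinate of $\tau(d)$ therefore receives a contribution only from $\tau_{\eta_i}(d_{\eta_i})$. Since $\tau(d)$ has support of cardinality $< \mu$ in $P'$, each $\tau_0(d_{\eta_i})$ has support of cardinality $< \mu$ in $P$, hence lies in $Q$; Lemma \ref{prest} then forces $d_{\eta_i} \in K$ for every $i$, so $\pi^{(\hbox{Br}(T))}(d) = 0$, proving injectivity of $\sigma$.

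Finally, commutativity of the diagram yields $\rho(L) = \sigma(\pi^{(\hbox{Br}(T))}(D^{(\hbox{Br}(T))})) = \sigma(C^{(\hbox{Br}(T))})$, and the injectivity of $\sigma$ lets me define $g := \sigma^{-1} \circ (\rho \restriction L)$. Then $g$ is surjective by construction, and its kernel is $L \cap \Ker{\rho} = L \cap Q' = H$, producing the desired short exact sequence. I expect the main obstacle to be Step 2, where the tree combinatorics (the feature permitted by the assumption $\kappa^{<\mu} = \kappa$) enters essentially; Steps 1 and 3 are routine diagram chasing.
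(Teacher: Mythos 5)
Your proof is correct. Since the paper states this lemma without a proof (deferring to~\cite{Sa1}), there is no in-text argument to compare against, but your approach is exactly the natural generalization of the argument given for Lemma~\ref{tree_module_L}(2): commutativity of the square amounts to $\tau(K^{(\hbox{Br}(T))}) \subseteq Q'$, and the heart of the matter is that $\sigma$ is injective, which replays the ``independence'' of the branch modules $(X_\nu + D)/D$ used in the $\mu = \omega$ case.

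Two small points are worth making explicit. First, your Step~2 relies on reading ``any two distinct branches coincide on an initial segment of $\mu$'' in the strong sense that after they diverge they never agree again; this is indeed forced by the requirement that the partial order on $T \subseteq \mu \times \kappa$ be a genuine tree order (a later coincidence of two branches would give a node with incomparable predecessors), so the reading is correct, but it is the crucial combinatorial input and deserves to be flagged as such. Second, in deducing that $\tau_0(d_{\eta_i}) \in Q$ from $\tau(d) \in Q'$, the correct accounting is: for $\alpha \geq \gamma$ the $(\alpha,\eta_i(\alpha))$-coordinate of $\tau(d)$ is exactly the $\alpha$-coordinate of $\tau_0(d_{\eta_i})$, so those nonzero coordinates are $<\mu$ many, while for $\alpha < \gamma$ there are only $|\gamma| < \mu$ coordinates; their union is $<\mu$. (Regularity of $\mu$ is not actually needed at this particular spot, since a union of two sets each of size $<\mu$ has size $<\mu$ for any infinite cardinal; regularity is used elsewhere, e.g.\ to see that $Q'$ is a submodule and in Step~1.) With these clarifications, Steps~1 and~3 are routine diagram chases as you say, and the short exact sequence $0 \to H \to L \overset{g}\to C^{(\hbox{Br}(T))} \to 0$ follows.
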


Following the pattern of Section \ref{Ch1}, we should now proceed to showing that, in some sense, $L$ is an almost free module:

\begin{definition} Let $\theta$ be an infinite cardinal and $M \in \rmod R$. Then $M$ is \emph{finitely $\theta$-separable} provided that $M$ is the directed union of a direct system consisting of $< \theta$-presented direct summands of $M$.
\end{definition}

\begin{lemma}\label{splite} The module $L$ from Lemma \ref{key} is finitely $\theta$-separable. Moreover, if $e$ is an idempotent endomorphism of $L$ such that $e \restriction H = 0$ and the endomorphism ring of $\hbox{Im}{(e)}$ is local, then $\hbox{Im}{(e)}$ is $< \theta$-presented.  
\end{lemma}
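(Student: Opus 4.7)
The plan is to prove the two assertions in turn. The first establishes that $L$ is the directed union of a family of $<\theta$-presented direct summands, and the second combines this structural fact with the idempotency of $e$ and the locality of its endomorphism ring.

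For the first assertion, for each pair $(S, \lambda)$ with $S \subseteq \hbox{Br}(T)$ of cardinality $<\theta$ and $\lambda < \mu$, let $L_{S,\lambda}$ denote the submodule of $L$ generated by the $\tau$-images of the $F_\alpha$-components ($\alpha < \lambda$) of the $\eta$-th copy of $D$ for $\eta \in S$. Using the continuity of $\mathcal E$ and the $<\theta$-presentedness of each $F_\alpha$, $L_{S,\lambda}$ is easily seen to be $<\theta$-presented. The critical step is to construct a retraction $\pi_{S,\lambda} : L \to L_{S,\lambda}$, obtained by projecting $P'$ onto the coordinates $(\alpha, \eta(\alpha))$ with $\eta \in S$ and $\alpha < \lambda$. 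The tree property $\kappa^{<\mu} = \kappa$, which ensures that distinct branches in $\hbox{Br}(T)$ already separate on an initial segment of $\mu$, implies that the contribution of any $\eta' \notin S$ to these coordinates has support of cardinality $<\mu$, hence lies in $Q'$. On $L$, such cross-contributions are therefore absorbed into $H$, permitting a well-defined retraction. The family $\{L_{S,\lambda}\}$ is directed, and since every element of $L$ is a $\tau$-image of a finitely supported element of $D^{(\hbox{Br}(T))}$, the union equals $L$.

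For the second assertion, assume $\mbox{Im}(e) \neq 0$. From $e\restriction H = 0$ we get $\mbox{Im}(e) \cap H = 0$, so the canonical quotient $L \twoheadrightarrow L/H \cong C^{(\hbox{Br}(T))}$ embeds $\mbox{Im}(e)$ into $C^{(\hbox{Br}(T))}$. Write $\iota : \mbox{Im}(e) \hookrightarrow L$ and $p_e : L \to \mbox{Im}(e)$ for the splitting of $e$, and $\iota_{S,\lambda} : L_{S,\lambda} \hookrightarrow L$ for the inclusion. Define
$$\phi_{S,\lambda} = p_e \circ \iota_{S,\lambda} \circ \pi_{S,\lambda} \circ \iota : \mbox{Im}(e) \to \mbox{Im}(e).$$
A direct check shows that $\phi_{S,\lambda}$ restricts to the identity on $\mbox{Im}(e) \cap L_{S,\lambda}$. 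Choosing $(S, \lambda)$ so that this intersection is nonzero---any single nonzero element of $\mbox{Im}(e)$ suffices, since $L = \bigcup L_{S,\lambda}$---we see that $1 - \phi_{S,\lambda}$ has nonzero kernel and so is not invertible in $\mbox{End}(\mbox{Im}(e))$. Locality of $\mbox{End}(\mbox{Im}(e))$ then forces $\phi_{S,\lambda}$ itself to be a unit, whence $p_e \circ \iota_{S,\lambda}$ is a surjection $L_{S,\lambda} \to \mbox{Im}(e)$ admitting the section $\pi_{S,\lambda} \circ \iota \circ \phi_{S,\lambda}^{-1}$. Thus $\mbox{Im}(e)$ is a direct summand of the $<\theta$-presented module $L_{S,\lambda}$, and consequently $<\theta$-presented.

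The main obstacle is the construction of the retraction $\pi_{S,\lambda}$ in part one. One must carefully verify that the coordinate projection, despite receiving contributions from infinitely many branches outside $S$, actually restricts to a module retraction on $L$; the key is showing that these cross-contributions land in $H$. This uses both the sparseness condition $\kappa^{<\mu} = \kappa$ and the identification of $H$ with small-support sequences via Lemma~\ref{prest}. Once this is in place, the second assertion follows from the clean endomorphism-ring argument above.
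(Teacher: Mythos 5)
The second half of your argument (the local-endomorphism-ring trick with $\phi_{S,\lambda}$) is a clean and standard idea, and it would indeed work \emph{if} the retraction $\pi_{S,\lambda}:L\to L_{S,\lambda}$ existed. The gap is precisely there.

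The coordinate projection of $P'$ onto the coordinates $(\alpha,\eta(\alpha))$ with $\eta\in S$, $\alpha<\lambda$ is not a retraction onto $L_{S,\lambda}$, and in fact does not even fix $L_{S,\lambda}$. Take a generator $\tau_\eta(x)$ of $L_{S,\lambda}$ with $\eta\in S$, $x\in F_\alpha$, $\alpha<\lambda$. Its coordinate at $(\gamma,\eta(\gamma))$ is $f_{\gamma\alpha}(x)$ for \emph{every} $\gamma\geq\alpha$, including all $\gamma\geq\lambda$; your projection kills these high coordinates and one checks that it sends $\tau_\eta(x)$ to $\tau_\eta\bigl(x-f_{\lambda\alpha}(x)\bigr)$, which differs from $\tau_\eta(x)$ and, since it involves a component in $F_\lambda$, does not lie in $L_{S,\lambda}$ either. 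So $\pi_{S,\lambda}\restriction L_{S,\lambda}\neq\mathrm{id}$ and the image is not contained in $L_{S,\lambda}$. The phrase ``such cross-contributions are therefore absorbed into $H$, permitting a well-defined retraction'' does not repair this: $H=L\cap Q'$ is a genuine nonzero submodule of $L$, and an element landing in $H$ is not thereby zero, so one does not get a module homomorphism $L\to L_{S,\lambda}$ this way. Separately, because $\tau$ need not be injective (an element $y\in F_\alpha$ with $f_{\gamma\alpha}(y)=0$ for all large $\gamma$ gives kernel elements), $L_{S,\lambda}$ is a priori only a quotient of $\bigoplus_{\eta\in S,\alpha<\lambda}F_\alpha$, so ``easily seen to be $<\theta$-presented'' also needs an argument.

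A further symptom that the role of $H$ is being missed: your part two uses $e\restriction H=0$ only to derive $\operatorname{Im}(e)\cap H=0$, and that fact is then never used. If your part one were correct as stated, the second assertion would follow for \emph{any} idempotent $e$ with $\operatorname{End}(\operatorname{Im}(e))$ local, with no mention of $H$ at all, which would make the hypothesis in the lemma vacuous. The intended argument must use $e\restriction H=0$ in an essential way---morally, $e$ factors through $L/H\cong C^{(\mathrm{Br}(T))}$, and the ``retraction'' one actually has is only defined modulo $H$ (or onto summands that include the relevant $F_\lambda$-tails), not on $L$ itself. As written, the construction of $\pi_{S,\lambda}$ is the missing step, and the argument does not go through without it.
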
   

The tree module from Lemma \ref{tree_module_L} was used in the proof of Theorem \ref{Sar_fml} to show that the Bass module $B$ has no $\aleph_1$-projective precover. Our goal now is to use the tree module from Lemma \ref{key} to show that almost split morphism cannot terminate in an infinitely generated module. The main tool is the following lemma from \cite{Sa1}:

\begin{lemma}\label{L-test}  Let $\theta$ be an infinite cardinal, $C$ be an $\theta$-presented module and $f : B \to C$ be a non-split epimorphism. Let $\kappa$ be an infinite cardinal such that $\card{R}, \theta \leq \kappa$, $\kappa = \kappa ^{< \cf{\theta}}$, and $\card{\Ker f} \leq \kappa ^{\cf{\theta}} = 2^\kappa$. 

Let $L$ be the tree module from Lemma \ref{key}. Then there exists $h \in \Hom R{C^{(\hbox{Br}(T))}}C$ such that $hg$ does not factorize through $f$.      
\end{lemma}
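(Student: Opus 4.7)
The plan is to argue by contradiction using the long exact $\operatorname{Ext}$-sequence associated with the tree-module short exact sequence $0 \to H \to L \overset{g}{\to} C^{(\hbox{Br}(T))} \to 0$ from Lemma \ref{key}, combined with a cardinality count that exploits the hypotheses $\kappa = \kappa^{<\cf{\theta}}$ and $\card{\Ker f}\leq 2^\kappa$. Put $K=\Ker f$ and let $\xi \in \Ext{1}{R}{C}{K}$ be the class of the non-split exact sequence $0 \to K \to B \overset{f}{\to} C \to 0$, so $\xi \neq 0$. For any $h \in \Hom{R}{C^{(\hbox{Br}(T))}}{C}$, the composition $hg$ factorizes through $f$ if and only if $g^\ast(h^\ast \xi) = 0$ in $\Ext{1}{R}{L}{K}$. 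Applying $\Hom{R}{-}{K}$ to the tree-module sequence produces the exact piece
$$\Hom{R}{H}{K} \overset{\partial}{\longrightarrow} \Ext{1}{R}{C^{(\hbox{Br}(T))}}{K} \overset{g^\ast}{\longrightarrow} \Ext{1}{R}{L}{K},$$
so the claim reduces to finding some $h$ with $h^\ast \xi \notin \im(\partial)$; we assume the opposite and derive a cardinality contradiction.

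For the upper bound on $|\im(\partial)|$, observe from the construction in Lemma \ref{key} that $H = L \cap Q'$, where $Q' \subseteq P' = \prod_{(\alpha,\beta)\in T} F_\alpha$ is the submodule of sequences with support of cardinality $< \mu := \cf{\theta}$, as in Lemma \ref{prest}. Since $\mu \leq \theta \leq \kappa$ and $T \subseteq \mu\times\kappa$, we have $|T| \leq \kappa$; each $F_\alpha$ is $<\theta$-presented, hence of cardinality $\leq \kappa$. Using $\kappa^{<\mu} = \kappa$, we get $|Q'| \leq |T|^{<\mu}\cdot \kappa^{<\mu} \leq \kappa$, so $|H| \leq \kappa$ and $|\im(\partial)| \leq |\Hom{R}{H}{K}| \leq |K|^{|H|} \leq (2^\kappa)^\kappa = 2^\kappa$. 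For the lower bound on the number of distinct classes $h^\ast \xi$, use the canonical identification $\Ext{1}{R}{C^{(\hbox{Br}(T))}}{K} \cong \prod_\eta \Ext{1}{R}{C}{K}$, under which $h = (h_\eta)_\eta$ acts componentwise: restricting each $h_\eta$ to $\{0, 1_C\}$ and using $\xi \neq 0$ produces $2^{|\hbox{Br}(T)|} = 2^{\kappa^{\cf{\theta}}} = 2^{2^\kappa}$ pairwise distinct classes $h^\ast \xi$. Since $2^{2^\kappa} > 2^\kappa \geq |\im(\partial)|$, they cannot all lie in $\im(\partial)$, and any $h$ with $h^\ast \xi \notin \im(\partial)$ is the desired map.

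The main obstacle is the bound $|H| \leq \kappa$: this is precisely where the decorated-tree construction of Lemma \ref{key} pays off, since $H$ is forced into the small-support submodule $Q'$, and it is precisely where the arithmetic assumption $\kappa^{<\cf{\theta}} = \kappa$ is indispensable, holding $|H|$ at $\kappa$ while allowing $|\hbox{Br}(T)| = \kappa^{\cf{\theta}} = 2^\kappa$ to inflate the supply of pullback classes all the way to $2^{2^\kappa}$.
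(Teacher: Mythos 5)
Your proof is correct, and the cardinality-counting argument is precisely the technique the paper uses (see the proof of Theorem~\ref{Sar_fml}, which applies $\Hom{R}{-}{\cdot}$ to the tree module sequence and compares the cardinality $\leq 2^\kappa$ of a term coming from the small submodule with a lower bound $2^{2^\kappa}$). The only minor point worth tightening in your write-up is the phrase ``$|T|^{<\mu}\cdot\kappa^{<\mu}$'': what you mean (and what works) is that there are at most $\kappa^{<\mu}=\kappa$ many supports of size $<\mu$ and at most $\kappa^{<\mu}=\kappa$ many value assignments on each, yielding $|Q'|\leq\kappa$ and hence $|H|\leq\kappa$.
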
 

With all these tools at hand, we can prove Theorem \ref{saroch}: 
\begin{proof} The if-part follows from Auslander's Theorem \ref{ausl}. For the only-if part, assume there exists an infinite cardinal $\theta$ and a right almost split morphism $f : B \to C$ such that $C$ is a $\theta$-presented, but not $< \theta$-presented module. Since $C$ is not finitely presented, each homomorphism from a finitely presented module to $C$ factorizes through $f$, whence $f$ is a (non-split) pure epimorphism. Moreover, the endomorphism ring of $C$ is local. 

The tree module $L$ from Lemma \ref{L-test} has the property that there exists $h \in \Hom R{C^{(\hbox{Br}(T))}}C$ such that $hg$ does not factorize through $f$. Then $hg : L \to C$ must be a split epimorphism, so there is $\iota : C \to L$ such that $h g \iota = 1_C$. Using Lemma \ref{splite} for the idempotent endomorphism $e = \iota h g$, we infer that $C \cong \hbox{Im}(e)$ is $< \theta$-presented, a contradiction.     
\end{proof}
  
\begin{remark} As noted in \cite{Sa1}, Theorem \ref{saroch} imposes strict limitations on the form of almost split sequences in $\rmod R$. Namely, it implies that if $0 \to A \overset{h}\to B \to C \to 0$ is an almost split sequence in $\rmod R$, then not only $C$ is finitely presented with local endomorphism ring, but also $A$ is pure-injective: otherwise, the non-split pure-embedding of $A$ into its pure-injective envelope factors through $h$, whence $h$ is a pure monomorphism and the almost split sequence splits, a contradiction.
\end{remark}

\begin{acknowledgment} The author thanks Jan \v Saroch for his comments on the first draft of this paper. 
\end{acknowledgment}


\begin{thebibliography}{EGPT}

\bibitem{AH}
{L.\ Angeleri H\"{u}gel, D.\ Herbera}, \textit{Mittag-Leffler conditions on modules}, Indiana  Math.\ J. 57(2008), 2459-2517.

\bibitem{AKT}
{L.\ Angeleri H\" ugel, O.\ Kerner, J.\ Trlifaj}, \textit{Large tilting modules and representation type}, Manuscripta Math. 132(2010), 483-499. 

\bibitem{APST} {L.\ Angeleri H\" ugel, D.\ Posp\'\i\v sil, J.\ \v S\v{t}ov\'\i\v cek, J.\ Trlifaj},
\textit{Tilting, cotilting, and spectra of commutative noetherian rings}, Trans.\ Amer.\ Math.\ Soc. 366(2014), 3487-3517.
 
\bibitem{AST}
L.\ Angeleri H\"{u}gel, J.\ \v{S}aroch, J.\ Trlifaj, \textit{Approximations and Mittag-Leffler conditions -- the applications}, to appear in Israel J.\ Math., arXiv:1612.01140v1.

\bibitem{A1} {M.\ Auslander}, \textit{Existence theorems for almost split sequences}, in Ring theory II (Proc.\ Second Conf., Univ. Oklahoma, Norman, Okla., 1975), LNPAM 26, M.\ Dekker, New York 1977, 1-44.

\bibitem{A2} {M.\ Auslander}, \textit{A survey of existence theorems for almost split sequences}, in Repres. of Algebras (Proc.\ Conf.\ Durham 1985), LMSLNS 116, Cambridge Univ.\ Pres., Cambridge 1986, 81–89.

\bibitem{AR} {M.\ Auslander, I.\ Reiten}, \textit{Representation theory of artin algebras III. Almost split sequences}, Comm. Algebra 3(1975), 239-294.

\bibitem{ARS} {M.\ Auslander, I.\ Reiten, S.\ Smal\o}, \textit{Representation Theory of Artin Algebras}, CSAM 36, Cambridge Univ.\ Press, Cambridge 1995.

\bibitem{Ba}
H.\ Bass, \textit{Finitistic dimension and a homological generalization of semiprimary rings}, 
Trans.\ Amer.\ Math.\ Soc. 95(1960), 466-488.

\bibitem{BS} S.\ Bazzoni, J.\ \v S\v tov\'\i\v cek, \textit{Flat Mittag-Leffler modules over countable rings}, Proc.\ Amer.\ Math.\ Soc. 140(2012), 1527-1533.

\bibitem{BEE} L.\ Bican, R.\ El Bashir, E.\ E.\ Enochs, \textit{All modules have flat covers}, Bull.\ London Math.\ Soc. 33(2001), 385-390.

\bibitem{EM}
P.\ C.\ Eklof, A.\ H.\ Mekler, \textit{Almost Free Modules (Set-theoretic Methods)}, 2nd revised ed., North-Holland Math. Library, Elsevier, Amsterdam 2002.

\bibitem{ES1} 
P.\ C.\ Eklof, S.\ Shelah, \textit{On Whitehead modules}, J.\ Algebra 142(1991), 492-510.

\bibitem{ES2} 
P.\ C.\ Eklof, S.\ Shelah, \textit{On the existence of precovers}, Illinois J.\ Math. 47(2003), 173-188.

\bibitem{ET}
P.\ C.\ Eklof, J.\ Trlifaj, \textit{How to make Ext vanish}, Bull.\ London Math.\ Soc. 33(2001), 41-51.

\bibitem{EJ1}
E.\ E.\ Enochs, O.\ M.\ G.\ Jenda, \textit{Relative Homological Algebra 1}, 2nd rev.\ ext.\ ed., GEM 30, W.\ de Gruyter, Berlin 2011.

\bibitem{EJ2}
E.\ E.\ Enochs, O.\ M.\ G.\ Jenda, \textit{Relative Homological Algebra 2}, GEM 54, W.\ de Gruyter, Berlin 2011.

\bibitem{EGPT} S.\ Estrada, P.\ Guil Asensio, M.\ Prest, J.\ Trlifaj: \textit{Model category structures arising from Drinfeld vector bundles}, Adv.\ Math. 231(2012), 1417-1438. 

\bibitem{GT1}
R.\ G\"{o}bel, J.\ Trlifaj, \textit{Cotilting and a hierarchy of almost cotorsion groups}, J.\ Algebra 224(2000), 110-122.

\bibitem{GT2}
R.\ G\"{o}bel, J.\ Trlifaj, \textit{Approximations and Endomorphism Algebras of Modules}, 2nd rev.\ ext.\ ed., Vols.\ 1 and 2, GEM 41, W.\ de Gruyter, Berlin 2012.

\bibitem{H} D.\ Herbera, \textit{Definable classes and Mittag-Leffler conditions}, Ring Theory and Its Appl., Contemporary Math. 609(2014), 137-166. 

\bibitem{HT}
D.\ Herbera, J.\ Trlifaj, \textit{Almost free modules and Mittag-Leffler conditions}, Adv.\ Math. 229(2012), 3436-3467.

\bibitem{Ho}
M.\ Hovey, \textit{Cotorsion pairs, model category structures, and representation theory}, Math.\ Z. 241(2002), 553-592.

\bibitem{HS}
{M.\ Hrbek, J.\ \v{S}\v{t}ov\'{i}\v{c}ek}, \textit{Tilting classes over commutative rings}, preprint, arXiv:1701.05534v1. 

\bibitem{P} 
L.Positselski, \textit{Contraherent cosheaves}, preprint, arXiv:1209.2995v5.

\bibitem{Pr}
{M.\ Prest}, \textit{Purity, Spectra and Localisation},
Enc.\ Math.\ Appl. 121, Cambridge Univ.\ Press, Cambridge 2009.

\bibitem{RG}
{M.\ Raynaud, L.\ Gruson}, \textit{Crit\`eres de platitude et de projectivit\'e}, Invent.\ math.\ 13(1971), 1-89.

\bibitem{R} {P.\ Rothmaler}, \textit{Mittag-Leffler modules and positive atomicity}, Habilitationsschrift, Christian-Albrechts-Universit\"at zu Kiel 1994. 

\bibitem{S}
{L. Salce}, \textit{Cotorsion theories for abelian groups}, Sympos.\ Math. 23(1979), 11-32.

\bibitem{Sa1}
J.\ \v{S}aroch, \textit{On the non-existence of right almost split maps}, Invent.\ math. 209(2017), 463-479.

\bibitem{Sa2}
J.\ \v{S}aroch, \textit{Approximations and Mittag-Leffler conditions -- the tools}, to appear in Israel J.\ Math., arXiv:1612.01138v1.

\bibitem{SaT} J.\ \v Saroch, J.\ Trlifaj, \textit{Kaplansky classes, finite character, and $\aleph_1$-projectivity}, Forum Math. 24(2012), 1091-1109. 

\bibitem{ST1}
A.\ Sl\'{a}vik, J.\ Trlifaj, \textit{Approximations and locally free modules}, Bull.\ London Math.\ Soc. 46(2014), 76-90.

\bibitem{ST2}
A.\ Sl\'{a}vik, J.\ Trlifaj, \textit{Very flat, locally very flat, and contraadjusted modules}, J.\ Pure Appl.\ Algebra 220(2016), 3910-3926.

\end{thebibliography}
\end{document}